\theoremstyle{plain}
\newtheorem{thm}{\protect\theoremname}
  \theoremstyle{remark}
  \newtheorem{rem}[thm]{\protect\remarkname}
  \theoremstyle{definition}
  \newtheorem{defn}[thm]{\protect\definitionname}
  \theoremstyle{plain}
  \newtheorem{lem}[thm]{\protect\lemmaname}
  \theoremstyle{plain}
  \newtheorem{prop}[thm]{\protect\propositionname}
  \theoremstyle{plain}
  \newtheorem{cor}[thm]{\protect\corollaryname}
\DeclareMathAlphabet{\mathitbf}{OML}{cmm}{b}{it}
  \providecommand{\corollaryname}{Corollary}
  \providecommand{\definitionname}{Definition}
  \providecommand{\lemmaname}{Lemma}
  \providecommand{\propositionname}{Proposition}
  \providecommand{\remarkname}{Remark}
\providecommand{\theoremname}{Theorem}
\begin{document}

\title{Grey Brownian motion local time: Existence and weak-approximation}

\author{\textbf{Jos{\'e} Lu{\'\i}s da Silva},\\
CCM, University of Madeira, Campus da Penteada,\\
9020-105 Funchal, Portugal.\\
Email: luis@uma.pt\and \textbf{Mohamed Erraoui}\\
Université Cadi Ayyad, Facult{\'e} des Sciences Semlalia,\\
D{\'e}partement de Math{\'e}matiques, BP 2390, Marrakech, Maroc\\
Email: erraoui@uca.ma}
\date{}
\maketitle
\begin{abstract}
In this paper we investigate the class of grey Brownian motions $B_{\alpha,\beta}$
($0<\alpha<2$, $0<\beta\leq1$). We show that grey Brownian motion
admits different representations in terms of certain known processes,
such as fractional Brownian motion, multivariate elliptical distribution
or as a subordination. The weak convergence of the increments of $B_{\alpha,\beta}$
in $t$, $w$-variables are studied. Using the Berman criterium we
show that $B_{\alpha,\beta}$ admits a $\lambda$-square integrable
local time $L^{B_{\alpha,\beta}}(\cdot,I)$ almost surely ($\lambda$
Lebesgue measure). Moreover, we prove that this local time can be
weak-approximated by the number of crossings $C^{B_{\alpha,\beta}^{\varepsilon}}(x,I)$,
of level $x$, of the convolution approximation $B_{\alpha,\beta}^{\varepsilon}$
of grey Brownian motion. \medskip{}

\noindent \textbf{Keywords}: Grey Brownian motion, fractional Brownian
motion, Local times, Number of crossings, Weak convergence, Convergence
in law. 
\end{abstract}
\tableofcontents{}

\section{Introduction}

Grey Brownian motion was introduced by W.~Schneider in \cite{MR1190506, Schneider90}
as a stochastic model for slow-anomalous diffusion described by the
time-fractional diffusion equation. Later F.~Minardi, A.~Mura and
G.~Pagnini \cite{Mura_mainardi_09,Mura_Pagnini_08}, extended this
class, so called ``generalized'' grey Brownian motion which includes
stochastic models for slow and fast-anomalous diffusion, i.e., the
time evolution of the marginal density function is described by partial
integro-differential equations of fractional type, cf.~Proposition~4.1
in \cite{Mura_mainardi_09}. In this paper we will consider this extended
class and study different representations, prove the existence of
local time and show the weak-convergence of the increments in the
$t$, $w$-variables.

To describe these results more precisely, let us recall the definition
of ``generalized'' grey Brownian motion $B_{\alpha,\beta}$ (denoted
by ggBm in Mura et al.~\cite{Mura_mainardi_09}) and from now denoted
simply by gBm for short. We will not reproduce the all construction
of the gBm here, the interested reader can find it in \cite{Mura_mainardi_09}
and references therein. The grey noise space is the probability space
$(S'(\mathbb{R}),\mathcal{B}(S'(\mathbb{R})),\mu_{\alpha,\beta})$,
where $S'(\mathbb{R})$ is the space of tempered distributions defined
on $\mathbb{R}$, $\mathcal{B}(S'(\mathbb{R}))$ is the $\sigma$-algebra
generated by the cylinder sets and $\mu_{\alpha,\beta}$ is the grey
noise measure given by its characteristic functional
\begin{equation}
\int_{S'(\mathbb{R})}e^{i\langle w,\varphi\rangle}\, d\mu_{\alpha,\beta}(w)=E_{\beta}\left(-\frac{1}{2}
\|\varphi\|_{\alpha}^{2}\right),\;\varphi\in S(\mathbb{R}),\;0<\beta\leq1,\;0<\alpha<2.\label{eq:gnm}
\end{equation}
Here $\langle\cdot,\cdot\rangle$ is the canonical bilinear pairing
between $S(\mathbb{R})$ and $S'(\mathbb{R})$ and $E_{\beta}$ is
the Mittag-Leffler function of order $\beta$, defined by
\[
E_{\beta}(x)=\sum_{n=0}^{\infty}\frac{x^{n}}{\Gamma(\beta n+1)},\quad x\in\mathbb{R}.
\]
In addition, 
\[
\|\varphi\|_{\alpha}^{2}=\Gamma(1+\alpha)\sin\left(\frac{\pi\alpha}{2}\right)\int_{\mathbb{R}}|x|^{1-\alpha}|
\tilde{\varphi}(x)|^{2}\, dx,\quad\varphi\in S(\mathbb{R}),
\]
with $\tilde{\varphi}$ denotes the Fourier transform of $\varphi$.
The range $0<\beta\leq1$ is to ensure the complete monotonicity of
$E_{\beta}$ (needed in (\ref{eq:gnm})) while the range $0<\alpha<2$
is chosen such that $\|1\!\!1_{[a,b)}\|_{\alpha}^{2}=(b-a)^{\alpha}<\infty$. 

It is easy to show that the random variable $X_{\alpha,\beta}(1\!\!1_{[0,t)})(\cdot)=
\langle\cdot,1\!\!1_{[0,1)}\rangle$,
$t\geq0$ is well defined as an element in $L^{2}(S'(\mathbb{R}),\mathcal{B}(S'(\mathbb{R})),
\mu_{\alpha,\beta})=:L^{2}(\mu_{\alpha,\beta})$
and 
\[
\|X_{\alpha,\beta}(1\!\!1_{[0,t)})\|_{L^{2}(\mu_{\alpha,\beta})}^{2}=\frac{1}{\Gamma(\beta+1)}\|1\!\!1_{[0,t)}
\|_{\alpha}^{2}=\frac{1}{\Gamma(\beta+1)}t^{\alpha}.
\]
The gBm $B_{\alpha,\beta}$ is then defined as the stochastic process
\[
B_{\alpha,\beta}=\big\{ B_{\alpha,\beta}(t):=X_{\alpha,\beta}(1\!\!1_{[0,t)}),\quad t\geq0\big\}.
\]
The following properties can be easily derived from (\ref{eq:gnm})
and the fact that $\|1\!\!1_{[0,t)}\|_{\alpha}^{2}=t^{\alpha}$.
\begin{enumerate}
\item $B_{\alpha,\beta}(0)=0$ almost surely. In addition, for each $t\geq0$,
the moments of any order are given by
\[
\begin{cases}
\mathbb{E}(B_{\alpha,\beta}^{2n+1}(t)) & =0,\\
\noalign{\vskip4pt}\mathbb{E}(B_{\alpha,\beta}^{2n}(t)) & =\frac{(2n)!}{2^{n}\Gamma(\beta n+1)}t^{n\alpha}.
\end{cases}
\]
Here $\mathbb{E}$ denotes the expectation with respect to $\mu_{\alpha,\beta}$. 
\item For each $t,s\geq0$, the characteristic function of the increments
is 
\begin{equation}
\mathbb{E}\big(e^{i\theta(B_{\alpha,\beta}(t)-B_{\alpha,\beta}(s))}\big)=E_{\beta}\left(-\frac{
\theta^{2}}{2}|t-s|^{\alpha}\right),\quad\theta\in\mathbb{R}.\label{eq:cf_gBm_increments}
\end{equation}

\item The covariance function has the form
\begin{equation}
\mathbb{E}(B_{\alpha,\beta}(t)B_{\alpha,\beta}(s))=\frac{1}{2\Gamma(\beta+1)}\big(t^{\alpha}+s^{\alpha}-|t-s|^
{\alpha}\big),\quad t,s\geq0.\label{eq:auto-cv-gBm}
\end{equation}

\end{enumerate}
All these properties may be summarized as follows. For any $0<\alpha<2$
and $0<\beta\leq1$, the gBm $B_{\alpha,\beta}(t)$, $t\geq0$, is
$\frac{\alpha}{2}$-self-similar with stationary increments. We refer
to \cite{Mura_mainardi_09} for the proof and more details. This class
includes fractional Brownian motion (fBm) for $\beta=1$, and Brownian
motion (Bm) for $\alpha=\beta=1$. The $B_{\alpha,\beta}(t)$ marginal
density function $f_{\alpha,\beta}(t)$ is given by 
\[
f_{\alpha,\beta}(x,t)=\frac{t^{-\alpha/2}}{\sqrt{2}}M_{\beta/2}(\sqrt{2}|x|t^{-\alpha/2}),
\]
where $M_{\beta}$ is the so-called $M$-Wright probability density
function (a natural generalization of the Gaussian density) with Laplace
transform
\begin{equation}
\int_{0}^{\infty}e^{-s\tau}M_{\beta}(\tau)\, d\tau=E_{\beta}(-s)\label{eq:M_wright}
\end{equation}
and moments given by
\begin{equation}
\int_{0}^{\infty}\tau^{k}M_{\beta}(\tau)\, d\tau=\frac{\Gamma(k+1)}{\Gamma(\beta k+1)},
\quad\forall k\in\mathbb{N}.\label{moments}
\end{equation}
The function $f_{\alpha,\beta}$ is the fundamental solution of the
stretched time-fractional diffusion equation, cf.~\cite[eq.~(6)]{Mura_Pagnini_08},
with initial condition $f_{\alpha,\beta}(x,0)=\delta(x)$. The Fourier
transform of $f_{\alpha,\beta}$ is given by
\[
\tilde{f}_{\alpha,\beta}(\theta,t)=\mathbb{E}(e^{i\theta B_{\alpha,\beta}(t)})=E_{\beta}\left(-\frac{1}{2}
\|\theta1\!\!1_{[0,t)}\|_{\alpha}^{2}\right)=E_{\beta}\left(-\frac{\theta^{2}}{2}t^{\alpha}\right),\quad\theta\in\mathbb{R}.
\]
The multidimensional case, given $\theta=(\theta_{1},\ldots,\theta_{n})\in\mathbb{R}^{n}$,
$n\in\mathbb{N}$ and any collection $\{B_{\alpha,\beta,}(t_{1}),\ldots,B_{\alpha,\beta,}(t_{n})\}$
with $0\leq t_{1}<t_{2}<\ldots<t_{n}<\infty$ we have
\begin{equation}
\tilde{f}(\theta,t)=\mathbb{E}\left(\exp\left(i\sum_{k=1}^{n}\theta_{k}B_{\alpha,\beta}(t_{k})\right)\right)=
E_{\beta}\left(-\frac{1}{2}\theta^{\top}\Sigma_{\alpha}\theta\right),\label{eq:gBm_nG}
\end{equation}
where $\Sigma_{\alpha}=(a_{i,j})_{i,j=1}^{n}$ is the matrix given
by 
\[
a_{i,j}=t_{i}^{\alpha}+t_{j}^{\alpha}-|t_{i}-t_{j}|^{\alpha}.
\]
Equation (\ref{eq:gBm_nG}) shows that gBm, which is not Gaussian
in general, is a stochastic process defined only through its first
and second moments which is a property of Gaussian processes indeed.

Till now we have described the class of processes which we are interested
in, namely gBm $B_{\alpha,\beta}$. In Section~\ref{sec:repr} we
show that gBm may be represented in terms of certain known processes
using its finite dimensional structure, Subsection~\ref{sub:fdr}.
In particular, the representation using fBm, see (\ref{gbm-rep})
below, is very useful in order to study gBm. While in Subsection~\ref{sub:odr}
we obtain other representations of gBm as subordinations valid for
one-dimensional distributions, namely Bm and fBm subordinated to certain
increasing processes. In Section~\ref{sec:wc-gBm} we study the weak
convergence of the increments of gBm using the representation of gBm
in terms of fBm. All the results on that section are strongly associated
to those for fBm despite the gBm being more general. Finally, on Section~\ref{sec:lt-gBm}
we make the first studies concerning the local times of gBm $L^{B_{\alpha,\beta}}$,
namely its existence using a criteria due to Berman \cite{Berman:1969wk}
and the weak approximation of the gBm local times by the number of
crossings. Thus, we have shown that $L^{B_{\alpha,\beta}}$ is $\lambda$-square
integrable, almost surely, cf.~Theorem~\ref{thm:gBm_LT}, and as
a result the occupation formula is valid. Moreover, defining the regularization
of $B_{\alpha,\beta}$ by convolution as $B_{\alpha,\beta}^{\varepsilon}$,
i.e., $B_{\alpha,\beta}^{\varepsilon}:=\psi_{\varepsilon}*B_{\alpha,\beta}$
for a proper $\psi$, we obtain a weak approximation of the local
times $L^{B_{\alpha,\beta}}$ by the number of crossings of $B_{\alpha,\beta}^{\varepsilon}$.
The proof is based on the Banach-Kac formula (see \cite[pag.~253]{Kratz2006})
and the occupation formula. For the rest of the paper, we fix the
following notation: by $N$ we denote the standard Gaussian random
variable and by $F_{\sigma}$ the centered Gaussian distribution with
variance $\sigma$ corresponding to the random variable $N_{\sigma}$.

\section{Representations of grey Brownian motion}

\label{sec:repr}

In this section we will show that gBm $B_{\alpha,\beta}$ admits different
representations which will be useful in proving certain properties
on the next sections. This is related to the fact that these representations
involves certain known processes, such as fractional Brownian motion
(fBm), and some questions related to gBm reduces to those of fBm or
others. As an example, the Hölder continuity of the trajectories of
gBm immediately follows from those of fBm, see (\ref{eq:cont_gBm})
below.

\subsection{Finite dimensional representation}

\label{sub:fdr}

It was shown in \cite{Mura_Pagnini_08} that the gBm $B_{\alpha,\beta}$
admits the following representation
\begin{equation}
\big\{ B_{\alpha,\beta}(t),\; t\geq0\big\}\overset{d}{=}\big\{\sqrt{Y_{\beta}}B_{H}(t),\; t\geq0\big\},\label{gbm-rep}
\end{equation}
where $\overset{d}{=}$ denotes the equality of the finite dimensional
distribution and $B_{H}$ is a standard fBm with Hurst parameter $H=\alpha/2$.
$Y_{\beta}$ is an independent non-negative random variable with probability
density function $M_{\beta}(\tau)$, $\tau\geq0$.
\begin{rem}
\begin{enumerate}\item A process with the representation given as
in (\ref{gbm-rep}) is known to be variance mixture of normal distributions,
see \cite[Chap.~VI]{Steutel_van_Harn_2004} for details. 

\item It follows from the representation (\ref{gbm-rep}) that the
Hölder continuity of the trajectories of gBm reduces to the Hölder
continuity of the fBm. Thus we have
\begin{equation}
\mathbb{E}(|B_{\alpha,\beta}(t)-B_{\alpha,\beta}(s)|^{p})=c_{p}|t-s|^{p\alpha/2}.\label{eq:cont_gBm}
\end{equation}
\item The representation also provides a way to study and simulate
the trajectories of gBm once we know how to generate the random variable
$Y_{\beta}$. 

\end{enumerate}
\end{rem}
The grey Brownian motion admits also other representations which we
present in the following. Before we give a definition.
\begin{defn}
[cf.~\cite{Fang90}]Consider a $n$-dimensional random vector $\mathitbf{X}=(X_{1},\ldots,X_{n})$.
The random vector $\mathitbf{X}$ has a multivariate elliptical distribution,
denoted by $E_{n}(\mu,\Sigma,\phi)$, if its characteristic function
can be expressed as
\[
\mathbb{E}(e^{i\theta\cdot\mathitbf{X}})=\exp\left(i\theta\cdot\mu\right)\phi\left(\frac{1}{2}
\theta^{\top}\Sigma\theta\right),\quad\theta\in\mathbb{R}^{n},
\]
where $\mu\in\mathbb{R}^{n}$, $\Sigma$ is a positive-definite matrix
and $\phi$, called the characteristic generator, is such that $\phi(\theta_{1}^{2}+\ldots+\theta_{n}^{2})$
is a $n$-dimensional characteristic function. If $\mathitbf{X}$
has a probability density function $f_{\mathitbf{X}}$, then it is
of the form
\[
f_{\mathitbf{X}}(x)=\frac{c}{\sqrt{\det(\Sigma)}}g\left(\frac{1}{2}(x-\mu)^{\top}\Sigma^{-1}(x-\mu)\right),
\]
for a certain function $g$, called density generator. In \cite{Fang90}
it is shown that 
\[
\int_{0}^{\infty}x^{n/2-1}g(x)\, dx<\infty,
\]
is sufficient for $g$ to be the density generator. 
\end{defn}
Let us consider, for a fixed $n\in\mathbb{N}$, the vector 
\[
\mathitbf{B}_{\alpha,\beta}:=\big(B_{\alpha,\beta}(t_{1}),\ldots,B_{\alpha,\beta}(t_{n})\big).
\]
Then $\mathitbf{B}_{\alpha,\beta}$ has a multivariate elliptical
distribution, written as 
\[
E_{n}(0,\Sigma_{\alpha},E_{\beta}(-\cdot)),
\]
The associated density generator of $\mathitbf{B}_{\alpha,\beta}$
is given by 
\[
g_{\beta}(x)=\frac{1}{(2\pi)^{n/2}}\int_{0}^{\infty}\tau^{-n/2}e^{-x/\tau}M_{\beta}(\tau)\, d\tau.
\]
A simple calculation shows that 
\[
\int_{0}^{\infty}x^{n/2-1}g_{\beta}(x)\, dx=(2\pi)^{-n/2}\Gamma\left(\frac{n}{2}\right)<\infty
\]
which guarantees $g_{\beta}$ to be the density generator of the vector
$\mathitbf{B}_{\alpha,\beta}$. Furthermore, $\mathitbf{B}_{\alpha,\beta}$
may be represented by 
\[
\mathitbf{B}_{\alpha,\beta}=R_{\beta}A_{\alpha}U,
\]
where $R_{\beta}\geq0$ is a radial random variable, $A_{\alpha}$
is such that $\Sigma_{\alpha}=A_{\alpha}A_{\alpha}^{\top}$ and $U$
is the uniform distribution on the sphere $\{x\in\mathbb{R}^{n}:\;\|x\|=1\}$.

\subsection{One dimensional representation}

\label{sub:odr}

On this subsection we obtain two representations of gBm as subordinations,
valid for one-dimensional distributions. At first we show that gBm
may be represented as a subordination of Brownian motion by a $\beta$-stable
subordinator, cf.~(\ref{eq:subord_Bm}). The second representation
of gBm as a subordination of fBm using as subordinator a process with
one-dimensional distribution related to the \emph{M}-Wright function. 

Let $S=\{S(t),\; t\in[0,1]\}$ be a $\beta$-stable subordinator and
define the inverse process of $S$ by
\[
E(x):=\inf\{t:\; S(t)>x\},\quad x\in\mathbb{R}^{+}.
\]
$E(x)$ is $\frac{1}{\beta}$-self-similar process with no independent/stationary
increments. Bingham \cite{Bingham1971} and Bondesson et al.~\cite{Bondesson1996}
showed that $E(x)$ has a Mittag-Leffler distribution
\[
\mathbb{E}(e^{-sE(x)})=E_{\beta}(-sx^{\beta}).
\]
It follows that
\[
\mathbb{E}(e^{-sE(x^{\alpha/\beta})})=E_{\beta}(-sx^{\alpha}).
\]
On the other hand, we have the equality in law $E(x)=(S(1)/x)^{-\beta}$
which implies 
\[
\mathbb{E}\big(e^{-sS^{-\beta}(1)}\big)=E_{\beta}(-s)
\]
and 
\[
\mathbb{E}\big(e^{-sS^{-\beta}(t^{-\alpha})}\big)=E_{\beta}(-st^{\alpha}).
\]
As a consequence, we obtain the following representation for the gBm
\begin{equation}
B_{\alpha,\beta}(t)=B(E(t^{\alpha/\beta}))=B(S^{-\beta}(t^{-\alpha})),\label{eq:subord_Bm}
\end{equation}
where $B$ is a standard Brownian motion independent of $S$ and the
equalities are valid only for one-dimensional distributions. 

Let $D_{\beta}=\{D_{\beta}(t),\; t\geq0\}$ be the process with one-dimensional
distribution given by, see \cite{Mainardi_Mura_Pagnini_2010},
\[
f_{D_{\beta}(t)}(x)=t^{-\beta}M_{\beta}(xt^{-\beta}),\quad x,t\geq0.
\]
The grey Brownian motion is represented as
\[
B_{\alpha,\beta}(t)=B_{H}\big(D_{\beta}^{1/\alpha}(t^{\alpha/\beta})\big)
\]
where $B_{H}$ and $D_{\beta}$ are independent. The equality is valid
only for one-dimensional distributions. The density $f_{D_{\beta}(t)}$
is the fundamental solution of the time-fractional drift equation
\[
\mathcal{D}_{t}^{\beta}f_{D_{\beta}(t)}(x)=-\frac{\partial}{\partial x}f_{D_{\beta}(t)}(x),
\]
where $\mathcal{D}_{t}^{\beta}$ denotes de Caputo derivative.

\section{Weak convergence of the increments of gBm}

\label{sec:wc-gBm}

In this section we are going to study the weak convergence of the
increments of gBm in the $t$, $w$-variables. This question is related
to the uniqueness for the moment problem of two distributions. More
precisely, if two distributions have the same moments are they the
same? The additional assumption required (see \cite[Corollary~2, pag.~296]{S96})
is satisfied in all the cases studied below.

\subsection{Weak convergence in the $t$-variable}

We are interested in the increments of gBm $B_{\alpha,\beta}$, namely
we define 
\[
Z_{\alpha,\beta,\varepsilon}(t):=\varepsilon^{-\alpha/2}\big(B_{\alpha,\beta}(t+\varepsilon)-
B_{\alpha,\beta}(t)\big),\quad\varepsilon>0,
\]
and would like to study the weak convergence of 
\[
\lambda\{t\in[0,1],\; Z_{\alpha,\beta,\varepsilon}(t)\leq x\}
\]
as $\varepsilon\rightarrow0$. This is equivalent to find the limit
of the $t$-characteristic function 
\[
\lim_{\varepsilon\rightarrow0}\int_{0}^{1}e^{iuZ_{\alpha,\beta,\varepsilon}(t)}\, dt.
\]
This question is related to the moment problem, that is, study the
limit 
\[
\lim_{\varepsilon\rightarrow0}\int_{0}^{1}Z_{\alpha,\beta,\varepsilon}^{k}(t)\, dt,\qquad k\in\mathbb{N}.
\]
From now on we use the representation (\ref{gbm-rep}) for the gBm,
$B_{\alpha,\beta}=\sqrt{Y_{\beta}}B_{H}$. 
\begin{lem}
\textup{For each $k\in\mathbb{N}$ and $t\in[0,1]$ we have 
\[
\mathbb{E}\left(\int_{0}^{t}Z_{\alpha,\beta,\varepsilon}^{k}(s)\, ds-tY_{\beta}^{k/2}
\mathbb{E}\big(N^{k}\big)\right)^{2}=O(\sqrt{\varepsilon}).
\]
}\end{lem}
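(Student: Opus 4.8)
The plan is to expand the square and compute the three resulting expectations, reducing everything to moments of fractional Brownian motion increments. Write $I_\varepsilon(t) := \int_0^t Z_{\alpha,\beta,\varepsilon}^k(s)\,ds$ and $M_k := \mathbb{E}(N^k)$. Since $B_{\alpha,\beta} = \sqrt{Y_\beta}\,B_H$ with $Y_\beta$ independent of $B_H$, we have $Z_{\alpha,\beta,\varepsilon}(s) = \sqrt{Y_\beta}\,\varepsilon^{-H}(B_H(s+\varepsilon)-B_H(s))$, so that conditioning on $Y_\beta$ factors out a power $Y_\beta^{k/2}$ from each occurrence of $Z^k$. Thus the cross term gives
\[
\mathbb{E}\big(I_\varepsilon(t)\cdot tY_\beta^{k/2}M_k\big) = tM_k\,\mathbb{E}\big(Y_\beta^{k}\big)\int_0^t \varepsilon^{-kH}\,\mathbb{E}\big((B_H(s+\varepsilon)-B_H(s))^k\big)\,ds = t^2 M_k^2\,\mathbb{E}(Y_\beta^k),
\]
using stationarity and self-similarity of fBm increments, $\varepsilon^{-kH}\mathbb{E}((B_H(s+\varepsilon)-B_H(s))^k) = M_k$ (zero for odd $k$). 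The same computation shows the last term equals $t^2 M_k^2\,\mathbb{E}(Y_\beta^k)$ as well. Hence the whole expression collapses to $\mathbb{E}(I_\varepsilon(t)^2) - t^2 M_k^2\,\mathbb{E}(Y_\beta^k)$, and it remains to show this is $O(\sqrt\varepsilon)$.

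For the main term, Fubini gives
\[
\mathbb{E}\big(I_\varepsilon(t)^2\big) = \int_0^t\!\!\int_0^t \varepsilon^{-2kH}\,\mathbb{E}\big(Y_\beta^k\big)\,\mathbb{E}\Big((B_H(s+\varepsilon)-B_H(s))^k(B_H(r+\varepsilon)-B_H(r))^k\Big)\,ds\,dr,
\]
again pulling out $\mathbb{E}(Y_\beta^k)$ by independence. For fBm the pair of increments is jointly Gaussian, so by Wick's formula the inner expectation is a polynomial in the covariance $\rho_\varepsilon(s,r) := \varepsilon^{-2H}\mathbb{E}((B_H(s+\varepsilon)-B_H(s))(B_H(r+\varepsilon)-B_H(r)))$, the individual variances being $\varepsilon^{2H}$. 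Concretely the integrand equals $M_k^2 + (\text{terms each carrying at least one factor }\rho_\varepsilon(s,r))$; the leading constant $M_k^2$ integrates to $t^2M_k^2\mathbb{E}(Y_\beta^k)$, which cancels against the subtracted quantity. So the lemma reduces to the bound
\[
\int_0^t\!\!\int_0^t \big|\rho_\varepsilon(s,r)\big|\,ds\,dr = O(\sqrt\varepsilon),
\]
since $|\rho_\varepsilon|\le 1$ makes the higher powers dominated by the first power.

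The crux is therefore this covariance estimate, and here I would use the standard fBm increment covariance: with $\rho(n) = \tfrac12(|n+1|^{2H}+|n-1|^{2H}-2|n|^{2H})$-type formula, one has for $u := (r-s)/\varepsilon$ that $\rho_\varepsilon(s,r) = \tfrac12(|u+1|^{2H}+|u-1|^{2H}-2|u|^{2H})$, which decays like $c_H |u|^{2H-2}$ as $|u|\to\infty$ and is bounded near $0$. Changing variables $u=(r-s)/\varepsilon$ turns $\int_0^t\!\int_0^t|\rho_\varepsilon|\,ds\,dr$ into $\varepsilon\int_{|u|\lesssim t/\varepsilon}|t-\varepsilon|u||\,|\tfrac12(|u+1|^{2H}+|u-1|^{2H}-2|u|^{2H})|\,du$; since $2H = \alpha \in (0,2)$, we have $2H-2 \in (-2,0)$, and the integral $\int |u|^{2H-2}\,du$ over $|u|\lesssim t/\varepsilon$ grows at most like $(t/\varepsilon)^{2H-1}$ when $2H>1$ (and is bounded when $2H<1$, with a logarithm at $2H=1$), so the product is $O(\varepsilon \cdot \varepsilon^{-\max(2H-1,0)}) = O(\varepsilon^{\min(2-2H,1)})$. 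This is $O(\sqrt\varepsilon)$ whenever $2-2H \ge 1/2$ — which need not hold for all $H<1$, so the honest statement is $O(\varepsilon^{1\wedge(2-\alpha)})$ uniformly in $t\in[0,1]$; the $O(\sqrt\varepsilon)$ phrasing in the lemma presumably corresponds to the range of $\alpha$ (or the worst case among the finitely many moment terms) actually needed later. The main obstacle is tracking these exponents carefully across the different powers of $\rho_\varepsilon$ arising from Wick's formula and confirming the claimed rate; everything else is bookkeeping with Fubini and the fBm covariance.
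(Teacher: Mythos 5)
Your first step --- using the independence of $Y_{\beta}$ and $B_{H}$ in the representation $B_{\alpha,\beta}=\sqrt{Y_{\beta}}B_{H}$ to pull out $\mathbb{E}(Y_{\beta}^{k})$ and reduce everything to the corresponding estimate for fractional Brownian motion --- is exactly what the paper does. The divergence is in the second step: the paper simply quotes the fBm estimate from the proof of Theorem~2.1 in Aza{\"\i}s--Wschebor \cite{Azais:1996} (see also \cite{Nourdin-2003}), whereas you prove it from scratch via Wick's formula and the decay $\rho_{\varepsilon}(s,r)\sim c_{H}|(s-r)/\varepsilon|^{2H-2}$. Your self-contained route is sound in structure, but note that for even $k$ your domination of all powers of $\rho_{\varepsilon}$ by $|\rho_{\varepsilon}|$ gives away information: the function $x^{k}-\mathbb{E}(N^{k})$ has Hermite rank $2$ when $k$ is even (the coefficient of $H_{1}$ is $\mathbb{E}(N^{k+1})=0$), so the variance is controlled by $\iint\rho_{\varepsilon}^{2}\,ds\,dr=O(\varepsilon^{\min(1,4-2\alpha)})$, which is $O(\sqrt{\varepsilon})$ for all $\alpha\le 7/4$, better than the rate your crude bound yields.

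The substantive point is the one you flag yourself. For odd $k$ the first-chaos coefficient is $a_{k,1}=\mathbb{E}(N^{k+1})\neq0$, and for $\alpha>1$ the increments of $B_{H}$ are positively correlated, so $\iint\rho_{\varepsilon}\,ds\,dr$ is genuinely of order $\varepsilon^{2-\alpha}$; your computation is then not merely an upper bound but gives the true order of the variance. Hence for odd $k$ and $\alpha\in(3/2,2)$ the quantity in the lemma is of order $\varepsilon^{2-\alpha}$, which is \emph{not} $O(\sqrt{\varepsilon})$: your ``honest'' rate $O(\varepsilon^{1\wedge(2-\alpha)})$ (improved to $O(\varepsilon^{1\wedge(4-2\alpha)})$ for even $k$) is the correct statement, and the lemma as written needs the restriction $\alpha\le 3/2$. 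This is invisible in the paper because its proof consists of the citation, but it is consistent with the dichotomy at $\alpha=3/2$ that the paper itself records in the later limit theorems (Breuer--Major versus Rosenblatt regimes). The downstream almost sure convergence is unaffected, since any polynomial rate combined with the H{\"o}lder continuity of the paths suffices for the Borel--Cantelli argument; but you should be aware that what you have produced is a corrected version of the lemma rather than a proof of it as stated.
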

\begin{proof}
Due to the independence of $Y_{\beta}$ and $B_{H}$ we have
\begin{eqnarray*}
 &  & \mathbb{E}\left(\int_{0}^{t}Z_{\alpha,\gamma,\varepsilon}^{k}(s)\, ds-tY_{\beta}^{k/2}
 \mathbb{E}\big(N^{k}\big)\right)^{2}\\
 & = & \mathbb{E}(Y_{\beta}^{k})\mathbb{E}\left(\int_{0}^{t}\varepsilon^{-kH}\big(B_{H}(s+\varepsilon)-
 B_{H}(s)\big)^{k}\, ds-t\mathbb{E}\big(N^{k}\big)\right)^{2}\\
 & = & \frac{\Gamma(k+1)}{\Gamma(\beta k+1)}\mathbb{E}\left(\int_{0}^{t}\varepsilon^{-kH}
 \big(B_{H}(s+\varepsilon)-B_{H}(s)\big)^{k}\, ds-t\mathbb{E}\big(N^{k}\big)\right)^{2}.
\end{eqnarray*}
In the last equality we used (\ref{moments}). It follows from the
proof of Theorem~2.1 in \cite{Azais:1996} that 
\[
\mathbb{E}\left(\int_{0}^{t}\varepsilon^{-kH}\big(B_{H}(s+\varepsilon)-B_{H}(s)\big)^{k}\, ds
-t\mathbb{E}\big(N^{k}\big)\right)^{2}=O(\sqrt{\varepsilon}),
\]
see also \cite{Nourdin-2003}. This implies the result of the lemma. 
\end{proof}
As a consequence of the Hölder continuity of the trajectories of gBm
$B_{\alpha,\beta}$, see (\ref{eq:cont_gBm}), and the Borel-Cantelli
lemma, we obtain the almost sure convergence, see \cite{Azais:1996}
or \cite{Nourdin-2003}.
\begin{prop}
We have almost surely 
\[
\int_{0}^{t}Z_{\alpha,\beta,\varepsilon}^{k}(s)\, ds\longrightarrow tY_{\beta}^{k/2}
\mathbb{E}\big(N^{k}\big),\;\varepsilon\rightarrow0,
\]
for any $t\in[0,1]$.
\end{prop}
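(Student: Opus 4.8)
The plan is to upgrade the $L^2$-convergence from the preceding Lemma to almost sure convergence along a suitable subsequence, and then to close the gap between subsequence values of $\varepsilon$ by an equicontinuity argument based on the H\"older regularity of the paths.

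First I would apply Chebyshev's inequality to the Lemma: writing $\xi_\varepsilon(t):=\int_0^t Z_{\alpha,\beta,\varepsilon}^{k}(s)\,ds - tY_\beta^{k/2}\mathbb{E}(N^k)$, the Lemma gives $\mathbb{E}(\xi_\varepsilon(t)^2)\le C\sqrt{\varepsilon}$ for a constant $C$ depending on $t$, $k$, $\alpha$, $\beta$. Choosing the dyadic-type subsequence $\varepsilon_n := n^{-4}$ (or $\varepsilon_n = 2^{-n}$ with an extra power under the square root handled by taking $\varepsilon_n=4^{-n}$), one gets $\sum_n \mathbb{E}(\xi_{\varepsilon_n}(t)^2) < \infty$, hence $\sum_n \mathbb{P}(|\xi_{\varepsilon_n}(t)|>\delta)<\infty$ for every $\delta>0$, so Borel--Cantelli yields $\xi_{\varepsilon_n}(t)\to 0$ almost surely, for each fixed $t$. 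A further intersection over rational $t$ (and a monotone-type control in $t$) gives a single null set outside of which $\xi_{\varepsilon_n}(t)\to 0$ for all $t\in[0,1]$.

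Next I would fill in the values of $\varepsilon$ between consecutive $\varepsilon_n$. For this, fix $\varepsilon\in(\varepsilon_{n+1},\varepsilon_n]$ and estimate $|\xi_\varepsilon(t)-\xi_{\varepsilon_n}(t)|$ by bounding $\big|\varepsilon^{-kH}(B_{\alpha,\beta}(s+\varepsilon)-B_{\alpha,\beta}(s))^k - \varepsilon_n^{-kH}(B_{\alpha,\beta}(s+\varepsilon_n)-B_{\alpha,\beta}(s))^k\big|$ pointwise in $s$. Using the representation $B_{\alpha,\beta}=\sqrt{Y_\beta}\,B_H$ and the almost sure local H\"older continuity of $B_H$ of any order $<H=\alpha/2$, together with the fact that the ratio $\varepsilon_n/\varepsilon_{n+1}$ is bounded, one controls this difference by $Y_\beta^{k/2}$ times a constant times $\varepsilon_n^{\eta}$ for some $\eta>0$ (coming from the slack between the H\"older exponent and $H$), uniformly in $s\in[0,1]$ and in $t$. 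Since $\varepsilon_n^{\eta}\to 0$, the oscillation of $\xi_\varepsilon(t)$ over each block $(\varepsilon_{n+1},\varepsilon_n]$ tends to $0$ almost surely, and combining with the subsequential convergence gives $\xi_\varepsilon(t)\to 0$ as $\varepsilon\to 0$ along the full family, almost surely and uniformly in $t\in[0,1]$; in particular it holds for every fixed $t$.

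The main obstacle I anticipate is the uniform-in-$t$ and uniform-in-$s$ control of the increment $|\xi_\varepsilon(t)-\xi_{\varepsilon_n}(t)|$: one must be careful that the H\"older constant of $B_H$ on $[0,2]$ is almost surely finite (which is standard, e.g.\ via Kolmogorov's continuity theorem applied to the moment bound \eqref{eq:cont_gBm}), and that raising increments to the $k$-th power and dividing by $\varepsilon^{kH}$ does not destroy the gain; this is where choosing the H\"older exponent strictly between, say, $H/2$ and $H$, and choosing $\varepsilon_n$ decaying geometrically, is essential so that $\varepsilon^{-kH}$ is comparable to $\varepsilon_n^{-kH}$ and the leftover power of $\varepsilon_n$ is strictly positive. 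Everything else is a routine combination of Chebyshev, Borel--Cantelli, and a telescoping estimate, exactly as in \cite{Azais:1996} and \cite{Nourdin-2003}, the only new ingredient being the harmless multiplicative factor $Y_\beta^{k/2}$ which is almost surely finite.
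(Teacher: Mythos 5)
Your proposal is correct and follows essentially the same route as the paper, which simply invokes the $L^{2}$ bound of the preceding Lemma together with Borel--Cantelli along a subsequence and the H\"older continuity of the trajectories (citing \cite{Azais:1996} and \cite{Nourdin-2003}) without writing out the details. Your write-up of the subsequence choice and the interpolation between consecutive $\varepsilon_{n}$ is in fact more explicit than what the paper provides.
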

The convergence of the moments implies the weak convergence of the
distribution $\lambda\big\{ t\in[0,1],\; Z_{\alpha,\beta,\varepsilon}(t)\leq x\big\}$,
this is sated in the following theorem. 
\begin{thm}
1. For almost surely for any $t\in[0,1]$ we have
\begin{equation}
\lambda\big\{ s\in[0,t],\; Z_{\alpha,\beta,\varepsilon}(s)\leq x\big\}\longrightarrow tF_{Y_{\beta}}(x),\;
\varepsilon\rightarrow0,\; x\neq0.\label{eq:conv-law1}
\end{equation}
2. For almost surely for each interval $I\subset\mathbb{R}_{+}$ and
all $x\in\mathbb{R}$, we have 
\begin{equation}
\lambda\big\{ t\in I,\; Z_{\alpha,\beta,\varepsilon}(t)\leq x\big\}\longrightarrow\lambda(I)
F_{Y_{\beta}}(x),\;\varepsilon\rightarrow0.\label{eq:conv-law2}
\end{equation}
This is obtained from 1.~using density argument.
\end{thm}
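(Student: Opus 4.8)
The plan is to deduce the weak convergence from the moment convergence already established in the preceding proposition, using a classical moment-to-law criterion and then an approximation argument to pass from fixed $t$ to general intervals.

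For part~1, I would fix $x \neq 0$ and a rational $t \in [0,1]$, and work on the almost-sure event (from the Proposition) where $\int_0^t Z_{\alpha,\beta,\varepsilon}^k(s)\,ds \to t\,Y_\beta^{k/2}\,\mathbb{E}(N^k)$ for every $k \in \mathbb{N}$ simultaneously. Dividing by $t$, the quantities $\mu_k^\varepsilon := t^{-1}\int_0^t Z_{\alpha,\beta,\varepsilon}^k(s)\,ds$ are the moments of the (random, $\varepsilon$-dependent) probability measure $\nu^\varepsilon(\cdot) := t^{-1}\lambda\{s \in [0,t]:\, Z_{\alpha,\beta,\varepsilon}(s) \in \cdot\}$, and they converge to $m_k := Y_\beta^{k/2}\mathbb{E}(N^k)$, which are exactly the moments of the law $F_{Y_\beta}$ of $\sqrt{Y_\beta}\,N = B_{\alpha,\beta}(1)$ (conditionally on $Y_\beta$, this is $N_{\sqrt{Y_\beta}}$, and its moments are $Y_\beta^{k/2}\mathbb{E}(N^k)$ by the moment formula in property~1 of gBm). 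The key point is that the moment problem for $F_{Y_\beta}$ is determinate: since $M$-Wright has all exponential moments / the Mittag-Leffler bound controls growth, the moments $m_{2k} = Y_\beta^k\frac{(2k)!}{2^k}$ satisfy a Carleman-type condition (this is exactly the uniqueness assumption invoked via \cite[Corollary~2, pag.~296]{S96} in the section preamble). Hence convergence of all moments to those of a determinate law forces $\nu^\varepsilon \Rightarrow F_{Y_\beta}$ weakly, and in particular $\nu^\varepsilon((-\infty,x]) \to F_{Y_\beta}(x)$ at every continuity point $x$ of $F_{Y_\beta}$; since $F_{Y_\beta}$ is continuous off $x=0$, the restriction $x \neq 0$ covers all such points. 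Multiplying back by $t$ gives \eqref{eq:conv-law1} for rational $t$; to get it for all $t \in [0,1]$ simultaneously on one almost-sure event, I would use monotonicity of $t \mapsto \lambda\{s\in[0,t]:\,Z_{\alpha,\beta,\varepsilon}(s)\le x\}$ together with the H\"older continuity \eqref{eq:cont_gBm}, exactly as in the cited references \cite{Azais:1996,Nourdin-2003}, so that a dense set of $t$'s suffices.

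For part~2, given an arbitrary interval $I = [a,b] \subset \mathbb{R}_+$, I would write $\lambda\{t \in [0,b]:\,\cdot\} - \lambda\{t\in[0,a]:\,\cdot\}$ and apply part~1 at the endpoints $a$ and $b$ (again using that only a dense set of endpoints is needed, then monotonicity), which yields the limit $(b-a)F_{Y_\beta}(x) = \lambda(I)F_{Y_\beta}(x)$. The stated ``density argument'' refers precisely to approximating a general bounded interval (and then, by $\sigma$-additivity and monotone limits, an unbounded one) by such differences; the self-similarity and stationary-increments properties ensure nothing degenerates as $I$ moves away from the origin.

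The main obstacle is the justification of the moment-to-law step — specifically verifying that $F_{Y_\beta}$ is moment-determinate, since $Y_\beta$ itself has heavier-than-Gaussian tails and one must check the growth rate of $m_{2k} = \mathbb{E}(Y_\beta^k)\frac{(2k)!}{2^k}\mathbb{E}(N^{2k})/\mathbb{E}(N^{2k})$... more carefully, $m_{2k} = Y_\beta^k (2k)!/(2^k k!)$ as a random coefficient, and one needs the deterministic mixed moments $\mathbb{E}(B_{\alpha,\beta}(1)^{2k}) = \frac{(2k)!}{2^k\Gamma(\beta k+1)}$ to satisfy Carleman's condition; this follows because $\Gamma(\beta k+1)$ grows fast enough (super-exponentially) that $\sum_k m_{2k}^{-1/2k} = \infty$. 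This is the content of the moment-problem remark in the section preamble, so the argument reduces to citing \cite{S96}; the remaining work (monotonicity plus H\"older continuity to handle all $t$ and all $I$ on a single null-set-excluded event) is routine and parallels \cite{Azais:1996}.
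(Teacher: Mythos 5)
Your proposal is correct and follows essentially the same route as the paper, which itself only sketches the argument: the a.s.\ convergence of all moments from the preceding Proposition, combined with the moment-determinacy of the (conditionally Gaussian) limit law $F_{Y_\beta}$ as guaranteed by the criterion cited from \cite{S96} in the section preamble, yields the weak convergence, and part~2 follows by differencing at the endpoints plus a density/monotonicity argument. Your extra discussion of Carleman's condition is slightly more than needed (for each realization of $Y_\beta$ the limit is an honest Gaussian, hence trivially determinate), but it does not affect the validity of the argument.
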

For non integer $k$ we have an analogous result based on a deep result
of Marcus and Rosen \cite{Marcus-Rosen08}.
\begin{cor}
It follows from Theorem 2.3 in \cite{Marcus-Rosen08} that for any
$1\leq p<\infty$ 
\begin{equation}
\int_{0}^{t}|Z_{\alpha,\beta,\varepsilon}(s)|^{p}\, ds\longrightarrow tY_{\beta}^{p/2}
\mathbb{E}\big(|N|^{p}\big),\;\varepsilon\rightarrow0\label{eq:moduli-cont}
\end{equation}
holds for any $t\in[0,1]$ almost surely.
\end{cor}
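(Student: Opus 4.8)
The plan is to reduce the statement to the analogous $L^p$-variation convergence for fractional Brownian motion via the representation $B_{\alpha,\beta}=\sqrt{Y_{\beta}}\,B_{H}$ of (\ref{gbm-rep}), and then to quote Theorem~2.3 of \cite{Marcus-Rosen08}. First I would use the representation to write, with $H=\alpha/2$,
\[
Z_{\alpha,\beta,\varepsilon}(s)=\varepsilon^{-\alpha/2}\big(B_{\alpha,\beta}(s+\varepsilon)-B_{\alpha,\beta}(s)\big)=\sqrt{Y_{\beta}}\;\varepsilon^{-H}\big(B_{H}(s+\varepsilon)-B_{H}(s)\big),
\]
so that, since $Y_{\beta}$ is independent of $B_{H}$,
\[
\int_{0}^{t}|Z_{\alpha,\beta,\varepsilon}(s)|^{p}\, ds=Y_{\beta}^{p/2}\int_{0}^{t}\varepsilon^{-pH}\big|B_{H}(s+\varepsilon)-B_{H}(s)\big|^{p}\, ds.
\]
Because the increments of the standard fBm are stationary with $\mathbb{E}\big(|B_{H}(s+\varepsilon)-B_{H}(s)|^{2}\big)=\varepsilon^{2H}$, the normalized increment $\varepsilon^{-H}(B_{H}(s+\varepsilon)-B_{H}(s))$ has unit variance, which is exactly the normalization under which Theorem~2.3 of \cite{Marcus-Rosen08} gives, for each fixed $t\in[0,1]$,
\[
\int_{0}^{t}\varepsilon^{-pH}\big|B_{H}(s+\varepsilon)-B_{H}(s)\big|^{p}\, ds\longrightarrow t\,\mathbb{E}\big(|N|^{p}\big),\qquad\varepsilon\to0,
\]
almost surely on the fBm factor of the grey noise space.

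Next I would upgrade this fixed-$t$ statement to a single almost sure event on which the convergence holds for every $t\in[0,1]$. Since the left-hand side above is nondecreasing in $t$ and the limit $t\mapsto t\,\mathbb{E}(|N|^{p})$ is continuous, a.s. convergence along a countable dense set of times, together with a monotonicity argument of Pólya type (squeeze $t$ between dense points and use continuity of the limit), yields a.s. convergence for all $t\in[0,1]$, exactly as was done for integer $k$ earlier in this section. Finally, $Y_{\beta}$ has the proper density $M_{\beta}$ on $[0,\infty)$, hence is a.s. finite; multiplying the last display by the a.s. finite random factor $Y_{\beta}^{p/2}$ (which is constant as far as the fBm limit is concerned, by independence) gives, on a full-measure subset of the product space,
\[
\int_{0}^{t}|Z_{\alpha,\beta,\varepsilon}(s)|^{p}\, ds=Y_{\beta}^{p/2}\int_{0}^{t}\varepsilon^{-pH}\big|B_{H}(s+\varepsilon)-B_{H}(s)\big|^{p}\, ds\longrightarrow t\,Y_{\beta}^{p/2}\,\mathbb{E}\big(|N|^{p}\big),
\]
for all $t\in[0,1]$, which is the assertion of the corollary.

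The algebraic reduction to fBm is immediate from (\ref{gbm-rep}), so the only genuine points requiring attention are (i) checking that the normalization of $B_{H}$ used here matches the hypotheses of Theorem~2.3 in \cite{Marcus-Rosen08} (it does, via $\mathbb{E}(B_{H}(t)^{2})=t^{2H}$), and (ii) the passage from the fixed-time statement to an almost sure statement uniform in $t$ and then from the fBm factor to the full grey noise space; the latter is routine once independence and a.s. finiteness of $Y_{\beta}$ are used.
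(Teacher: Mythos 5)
Your proposal is correct and follows essentially the same route the paper intends: the paper offers no written proof beyond citing Theorem~2.3 of \cite{Marcus-Rosen08}, relying implicitly on the factorization $Z_{\alpha,\beta,\varepsilon}=\sqrt{Y_{\beta}}\,\varepsilon^{-H}(B_{H}(\cdot+\varepsilon)-B_{H}(\cdot))$ from (\ref{gbm-rep}) exactly as you do. Your additional care about the normalization of $B_{H}$, the upgrade to all $t\in[0,1]$ by monotonicity, and the transfer across the independent factor $Y_{\beta}$ simply makes explicit what the paper leaves implicit.
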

The above result is also valid in a more general context. More precisely,
let $B_{\alpha,\beta}^{\varepsilon}$ be the convolution approximation
of $B_{\alpha,\beta}$ (or regularization of $B_{\alpha,\beta}$)
defined by
\[
B_{\alpha,\beta}^{\varepsilon}=\psi_{\varepsilon}*B_{\alpha,\beta},\qquad\mathrm{where}
\quad\psi_{\varepsilon}(t)=\frac{1}{\varepsilon}\psi\left(\frac{t}{\varepsilon}\right),
\]
$\psi$ is a bounded variation function with support included in $[-1,1]$
and $\int_{\mathbb{R}}\psi(t)\, dt=1$. In addition, define 
\[
\tilde{Z}_{\alpha,\beta,\varepsilon}(t):=\varepsilon^{1-\alpha/2}\frac{d}{dt}B_{\alpha,
\beta}^{\varepsilon}(t),\quad t\geq0.
\]
We have the following weak convergence for the $\tilde{Z}_{\alpha,\beta,\varepsilon}$. 
\begin{thm}
1. For almost surely for all $x\in\mathbb{R}$ we have
\begin{equation}
\lambda\big\{ t\in[0,1],\;\tilde{Z}_{\alpha,\beta,\varepsilon}(t)\leq x\big\}\longrightarrow 
F_{C_{\psi}Y_{\beta}}(x),\;\varepsilon\rightarrow0,\label{eq:conv-law-reg}
\end{equation}
where $C_{\psi}$ is given by
\[
C_{\psi}=\left(-\frac{1}{2}\int_{-1}^{1}\int_{-1}^{1}|u-v|^{\alpha}d\psi(u)\, d\psi(v)\right)^{1/2}.
\]

2. For almost surely for each interval $I\subset\mathbb{R}_{+}$ and
all $x\in\mathbb{R}$, it follows from (\ref{eq:conv-law-reg}) that
\begin{equation}
\lambda\big\{ t\in I,\;\tilde{Z}_{\alpha,\beta,\varepsilon}(t)\leq x\big\}\longrightarrow
\lambda(I)F_{C_{\psi}Y_{\beta}}(x),\;\varepsilon\rightarrow0.\label{eq:conv-law-reg1}
\end{equation}

\end{thm}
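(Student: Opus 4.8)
The plan is to reduce the statement to the corresponding one for fractional Brownian motion via the representation (\ref{gbm-rep}), and then to run the moment/occupation-measure argument already used for the preceding results. First I would note that $\psi_\varepsilon*$ is a deterministic linear operation, so the equality in finite-dimensional distributions $B_{\alpha,\beta}\overset{d}{=}\sqrt{Y_\beta}\,B_H$ ($H=\alpha/2$, with $Y_\beta$ independent of $B_H$ and density $M_\beta$) transfers to the regularizations: for each fixed $\varepsilon>0$,
\[
\{B_{\alpha,\beta}^{\varepsilon}(t),\,t\ge0\}\overset{d}{=}\{\sqrt{Y_\beta}\,B_H^{\varepsilon}(t),\,t\ge0\},\qquad B_H^{\varepsilon}=\psi_\varepsilon*B_H,
\]
hence $\tilde Z_{\alpha,\beta,\varepsilon}(t)\overset{d}{=}\sqrt{Y_\beta}\,\tilde Z_{H,\varepsilon}(t)$ with $\tilde Z_{H,\varepsilon}(t)=\varepsilon^{1-\alpha/2}\frac{d}{dt}B_H^{\varepsilon}(t)$. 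Next I would put $\tilde Z_{H,\varepsilon}$ in a form amenable to computation. Since $\psi$ is of bounded variation with support in $[-1,1]$ and $\int_{\mathbb R}\psi=1$, writing $\psi(u)=\int_{(-1,u]}d\psi(v)$ and using Fubini together with the fundamental theorem of calculus one gets $\frac{d}{dt}B_H^{\varepsilon}(t)=\varepsilon^{-1}\int_{-1}^{1}B_H(t-\varepsilon v)\,d\psi(v)$; as $\int_{-1}^{1}d\psi=0$ we may subtract $B_H(t)$ inside the integral, obtaining
\[
\tilde Z_{H,\varepsilon}(t)=\varepsilon^{-\alpha/2}\int_{-1}^{1}\big(B_H(t-\varepsilon v)-B_H(t)\big)\,d\psi(v).
\]

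For fixed $\varepsilon$ this is a centered, stationary Gaussian process in $t$. Using the covariance of the increments of $B_H$ (equivalently, (\ref{eq:auto-cv-gBm})) and the $H$-self-similarity, a short computation yields, independently of $\varepsilon$ and $t$,
\[
\mathrm{Var}\big(\tilde Z_{H,\varepsilon}(t)\big)=-\tfrac12\int_{-1}^{1}\int_{-1}^{1}|u-v|^{\alpha}\,d\psi(u)\,d\psi(v)=C_\psi^{2},
\]
the right-hand side being $\ge0$ because $|u-v|^{\alpha}$ is conditionally negative definite for $0<\alpha<2$ and $d\psi$ has total mass $0$ (the same fact that makes $B_H$ well defined). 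Consequently, conditionally on $Y_\beta$ the variable $\tilde Z_{\alpha,\beta,\varepsilon}(t)$ is centered Gaussian with variance $C_\psi^{2}Y_\beta$, which is the Gaussian law $F_{C_\psi Y_\beta}$ appearing in (\ref{eq:conv-law-reg}).

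Then I would carry out the moment method exactly as for the earlier theorems. By the independence of $Y_\beta$ and $B_H$ and the moment formula (\ref{moments}), for every $k\in\mathbb N$ and $t\in[0,1]$,
\[
\mathbb E\Big(\int_0^{t}\tilde Z_{\alpha,\beta,\varepsilon}^{k}(s)\,ds-tY_\beta^{k/2}\mathbb E\big((C_\psi N)^{k}\big)\Big)^{2}=\frac{\Gamma(k+1)}{\Gamma(\beta k+1)}\,\mathbb E\Big(\int_0^{t}\tilde Z_{H,\varepsilon}^{k}(s)\,ds-t\,\mathbb E\big((C_\psi N)^{k}\big)\Big)^{2},
\]
and the fractional-Brownian factor on the right is $O(\sqrt\varepsilon)$ by the same scheme as in \cite{Azais:1996}, see also \cite{Nourdin-2003}. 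Combining this $L^2$ estimate with the Hölder bound (\ref{eq:cont_gBm}) for $B_{\alpha,\beta}$ (which controls the increments of the regularization between mesh points) and Borel--Cantelli along a sequence $\varepsilon_n\downarrow0$ with $\sum_n\sqrt{\varepsilon_n}<\infty$, one obtains, almost surely, $\int_0^{t}\tilde Z_{\alpha,\beta,\varepsilon}^{k}(s)\,ds\to tY_\beta^{k/2}\mathbb E((C_\psi N)^{k})$ for all $t\in[0,1]$ and all $k$. Since these limits are $t$ times the $k$-th moments of the conditional Gaussian law $F_{C_\psi Y_\beta}$, and that law satisfies the moment-determinacy condition invoked at the beginning of Section~\ref{sec:wc-gBm} (cf.~\cite{S96}), convergence of the moments of the occupation measure forces the weak convergence (\ref{eq:conv-law-reg}). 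Part 2, i.e.~(\ref{eq:conv-law-reg1}), then follows from part 1 by splitting $I$ into finitely many small subintervals and using the additivity of $t\mapsto\lambda\{s\in[0,t]:\tilde Z_{\alpha,\beta,\varepsilon}(s)\le x\}$, exactly as in the passage from (\ref{eq:conv-law1}) to (\ref{eq:conv-law2}).

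The step I expect to be the main obstacle is the $O(\sqrt\varepsilon)$ rate in $L^{2}$ for the occupation moments of the regularized fractional Brownian motion: this is the genuinely technical ingredient, borrowed from \cite{Azais:1996,Nourdin-2003}, whose proof needs careful bounds on the $k$-fold integrals $\int_{[0,t]^{k}}\mathbb E\big[\prod_{j}\tilde Z_{H,\varepsilon}(s_j)\big]\,ds$ obtained from stationarity, self-similarity, and the decay of the correlation of $\tilde Z_{H,\varepsilon}$ at separations large compared with $\varepsilon$. A secondary, purely bookkeeping point is the passage from convergence along the subsequence $(\varepsilon_n)$ to convergence as $\varepsilon\to0$, which is handled as in those references via monotonicity in the level $x$ and the path regularity of $B_{\alpha,\beta}$.
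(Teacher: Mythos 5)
Your proposal is correct and follows exactly the strategy the paper uses for the unregularized increments $Z_{\alpha,\beta,\varepsilon}$ (representation (\ref{gbm-rep}) to pass to fBm, the $L^{2}$ moment estimate of order $O(\sqrt{\varepsilon})$ borrowed from \cite{Azais:1996,Nourdin-2003}, Borel--Cantelli plus H\"older continuity, moment determinacy, and a density/additivity argument for part 2); the paper in fact states this theorem without writing out a proof, asserting only that the earlier argument extends to the convolution approximation. Your explicit identification of $\tilde Z_{H,\varepsilon}$ as a stationary Gaussian field with variance $C_{\psi}^{2}$ supplies the detail the paper leaves implicit and is consistent with its definition of $C_{\psi}$.
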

We notice that  (\ref{eq:moduli-cont}) gives the $L^{p}$ moduli
of continuity of gBm. Next we state the $L^{p}$-moduli continuity
of squares of gBm which is based on Lemma~3.1 in \cite{Marcus-Rosen08}.
\begin{cor}
For any $p\in[1,\infty)$, we have the convergence of the $L^{p}$
moduli of the squares of gBm, as $\varepsilon\rightarrow0$ 
\begin{equation}
\int_{0}^{t}\varepsilon^{-\alpha/2}\big|B_{\alpha,\beta}^{2}(s+\varepsilon)-B_{\alpha,\beta}^{2}(s)
\big|^{p}\, ds\longrightarrow2^{p}Y_{\beta}^{p/2}\mathbb{E}\big(|N|^{p}\big)\int_{0}^{t}|B_{\alpha,
\beta}(s)|^{p}\, ds\label{eq:conv-p}
\end{equation}
holds for any $t\in[0,1]$ almost surely.
\end{cor}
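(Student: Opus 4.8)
The plan is to reduce (\ref{eq:conv-p}) to the corresponding $L^{p}$-moduli result for fractional Brownian motion, pulling the extra random factor out via the representation (\ref{gbm-rep}).

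First I would fix the version $B_{\alpha,\beta}=\sqrt{Y_{\beta}}B_{H}$, $H=\alpha/2$, with $B_{H}$ a standard fBm and $Y_{\beta}$ independent with density $M_{\beta}$; since (\ref{gbm-rep}) is an equality of finite-dimensional distributions and both processes admit continuous modifications (cf.~(\ref{eq:cont_gBm})), they coincide in law on path space, so it is enough to establish (\ref{eq:conv-p}) for this version. The elementary identity
\[
B_{\alpha,\beta}^{2}(s+\varepsilon)-B_{\alpha,\beta}^{2}(s)=Y_{\beta}\big(B_{H}^{2}(s+\varepsilon)-B_{H}^{2}(s)\big)
\]
shows that the left-hand side of (\ref{eq:conv-p}) equals $Y_{\beta}^{p}$ times the same integral formed with $B_{H}$ in place of $B_{\alpha,\beta}$.

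Next I would invoke Lemma~3.1 in \cite{Marcus-Rosen08}, applied to the fBm $B_{H}$ (whose increment over a lag $\varepsilon$ has variance $\varepsilon^{\alpha}$): almost surely, for each $t\in[0,1]$, that integral converges to $2^{p}\mathbb{E}(|N|^{p})\int_{0}^{t}|B_{H}(s)|^{p}\,ds$ as $\varepsilon\to0$. The mechanism behind it is the factorisation $B_{H}^{2}(s+\varepsilon)-B_{H}^{2}(s)=(B_{H}(s+\varepsilon)-B_{H}(s))(B_{H}(s+\varepsilon)+B_{H}(s))$: the second factor converges uniformly in $s$ to $2B_{H}(s)$ by path continuity, while the normalised first factor, once integrated against the continuous weight, contributes the constant $\mathbb{E}(|N|^{p})$; this is exactly the weighted strengthening of (\ref{eq:moduli-cont}) established in \cite{Marcus-Rosen08}.

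Finally I would glue the two pieces together using independence and Fubini: freezing $Y_{\beta}$, which is finite almost surely, the $B_{H}$-full-measure event of the previous step has probability one, hence so does the joint event; multiplying through by $Y_{\beta}^{p}$ and using $Y_{\beta}^{p}|B_{H}(s)|^{p}=Y_{\beta}^{p/2}\,|\sqrt{Y_{\beta}}\,B_{H}(s)|^{p}=Y_{\beta}^{p/2}|B_{\alpha,\beta}(s)|^{p}$ yields precisely (\ref{eq:conv-p}). The only substantial ingredient is Lemma~3.1 of \cite{Marcus-Rosen08}, namely the fact that the pointwise non-convergent normalised squared increments nonetheless average out to $2^{p}\mathbb{E}(|N|^{p})$ when integrated against a continuous weight; the remaining steps---the scaling bookkeeping with $Y_{\beta}$ and the transfer across the representation---are routine. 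If one wants the limit valid off a single null set simultaneously for all $t\in[0,1]$, one invokes the continuity and monotonicity of $t\mapsto\int_{0}^{t}(\cdots)\,ds$ as in the proofs of the preceding theorems.
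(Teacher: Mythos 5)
Your argument is correct and is exactly the route the paper intends: the paper offers no proof of this corollary beyond citing Lemma~3.1 of \cite{Marcus-Rosen08}, and your reduction via $B_{\alpha,\beta}=\sqrt{Y_{\beta}}B_{H}$, the factorisation $B^{2}(s+\varepsilon)-B^{2}(s)=(B(s+\varepsilon)-B(s))(B(s+\varepsilon)+B(s))$ with uniform convergence of the second factor, and the bookkeeping $Y_{\beta}^{p}|B_{H}|^{p}=Y_{\beta}^{p/2}|B_{\alpha,\beta}|^{p}$ is precisely the intended fleshing-out. The only caveat is that the normalisation in (\ref{eq:conv-p}) is printed as $\varepsilon^{-\alpha/2}$ \emph{outside} the $p$-th power, which is consistent with the stated limit only when read as $\big|\varepsilon^{-\alpha/2}(\cdots)\big|^{p}$, i.e.\ $\varepsilon^{-p\alpha/2}$ --- the scaling your appeal to Lemma~3.1 silently (and correctly) uses.
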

To establish a similar convergence in law  for the increments of the
squares of gBm we need the convergence of the odd moments. This is
the contents of the following corollary.
\begin{cor}
1. For odd $k\in\mathbb{N}$ we have 
\begin{equation}
\lim_{\varepsilon\rightarrow0}\int_{0}^{t}\varepsilon^{-\alpha/2}\big(B_{\alpha,\beta}^{2}(s+\varepsilon)-
B_{\alpha,\beta}^{2}(s)\big)^{k}\, ds=0,\label{eq:conv-odd}
\end{equation}
almost surely for all $t\in[0,1]$. 

\noindent 2. Combining (\ref{eq:conv-p}) with even $p\in\mathbb{N}$
and (\ref{eq:conv-odd}) yields
\[
\lambda\big(\big\{ t\in[0,1]:\;\varepsilon^{-\alpha/2}\big(B_{\alpha,\beta}^{2}(t+\varepsilon)-
B_{\alpha,\beta}^{2}(t)\big)\leq x\big\}\big)\longrightarrow\int_{0}^{1}F_{4Y_{\beta}B_{\alpha,\beta}^{2}(s)}(x)\, ds.
\]
Here, the right hand side distribution, is a normal variance mixture.\end{cor}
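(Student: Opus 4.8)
\emph{Part~1.} The plan is to reduce the odd power of the increment of $B_{\alpha,\beta}^{2}$ to the increments of $B_{\alpha,\beta}$ themselves, which are already controlled by the Proposition of Section~\ref{sec:wc-gBm} and by \eqref{eq:moduli-cont}. Writing $\Delta_{\varepsilon}(s):=B_{\alpha,\beta}(s+\varepsilon)-B_{\alpha,\beta}(s)$ one has $B_{\alpha,\beta}^{2}(s+\varepsilon)-B_{\alpha,\beta}^{2}(s)=\Delta_{\varepsilon}(s)\bigl(2B_{\alpha,\beta}(s)+\Delta_{\varepsilon}(s)\bigr)$; expanding the $k$-th power and inserting $Z_{\alpha,\beta,\varepsilon}=\varepsilon^{-\alpha/2}\Delta_{\varepsilon}$ gives
\begin{multline*}
\int_{0}^{t}\varepsilon^{-k\alpha/2}\bigl(B_{\alpha,\beta}^{2}(s+\varepsilon)-B_{\alpha,\beta}^{2}(s)\bigr)^{k}\,ds\\
=\sum_{j=0}^{k}\binom{k}{j}2^{k-j}\varepsilon^{j\alpha/2}\int_{0}^{t}B_{\alpha,\beta}(s)^{k-j}Z_{\alpha,\beta,\varepsilon}(s)^{k+j}\,ds .
\end{multline*}
For $j\geq1$ the prefactor $\varepsilon^{j\alpha/2}\to0$ while $\bigl|\int_{0}^{t}B_{\alpha,\beta}(s)^{k-j}Z_{\alpha,\beta,\varepsilon}(s)^{k+j}\,ds\bigr|\leq\|B_{\alpha,\beta}\|_{\infty,[0,1]}^{k-j}\int_{0}^{t}|Z_{\alpha,\beta,\varepsilon}(s)|^{k+j}\,ds$ stays bounded by \eqref{eq:moduli-cont}, so every such term tends to $0$ almost surely. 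For $j=0$ I would approximate the continuous function $s\mapsto\bigl(2B_{\alpha,\beta}(s)\bigr)^{k}$ uniformly on $[0,t]$ by step functions, apply the Proposition of Section~\ref{sec:wc-gBm} on the subintervals of the partition, and bound the approximation error once more by \eqref{eq:moduli-cont}; this identifies the limit as $Y_{\beta}^{k/2}\mathbb{E}(N^{k})\int_{0}^{t}\bigl(2B_{\alpha,\beta}(s)\bigr)^{k}\,ds$, which vanishes since $\mathbb{E}(N^{k})=0$ for odd $k$. This proves \eqref{eq:conv-odd}.

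\emph{Part~2.} Here the plan is the method of moments, carried out pathwise. Put $W_{\varepsilon}(t):=\varepsilon^{-\alpha/2}\bigl(B_{\alpha,\beta}^{2}(t+\varepsilon)-B_{\alpha,\beta}^{2}(t)\bigr)$ and let $\nu_{\varepsilon}$ be the probability measure on $\mathbb{R}$ that is the image of Lebesgue measure on $[0,1]$ under the continuous map $t\mapsto W_{\varepsilon}(t)$, so that the quantity in the statement equals $\nu_{\varepsilon}\bigl((-\infty,x]\bigr)$. Restrict attention to the full-measure event on which \eqref{eq:conv-p} holds for every even $k$, \eqref{eq:conv-odd} for every odd $k$, the trajectory $s\mapsto B_{\alpha,\beta}(s)$ is continuous, $Y_{\beta}\in(0,\infty)$, and $\{s\in[0,1]:B_{\alpha,\beta}(s)=0\}$ is Lebesgue-null. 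The $k$-th moment of $\nu_{\varepsilon}$ equals $\int_{0}^{1}W_{\varepsilon}(t)^{k}\,dt=\int_{0}^{1}\varepsilon^{-k\alpha/2}\bigl(B_{\alpha,\beta}^{2}(t+\varepsilon)-B_{\alpha,\beta}^{2}(t)\bigr)^{k}\,dt$, which tends to $0$ for odd $k$ by \eqref{eq:conv-odd} and to $m_{k}:=2^{k}Y_{\beta}^{k/2}\mathbb{E}(|N|^{k})\int_{0}^{1}|B_{\alpha,\beta}(s)|^{k}\,ds$ for even $k$ by \eqref{eq:conv-p}. A one-line computation then identifies $\{m_{k}\}_{k\geq0}$ with the moment sequence of $G(x):=\int_{0}^{1}F_{4Y_{\beta}B_{\alpha,\beta}^{2}(s)}(x)\,ds$: the $k$-th moment of the centered Gaussian $F_{4Y_{\beta}B_{\alpha,\beta}^{2}(s)}$ is $\bigl(4Y_{\beta}B_{\alpha,\beta}^{2}(s)\bigr)^{k/2}\mathbb{E}(N^{k})$, which vanishes for odd $k$ and integrates over $s\in[0,1]$ to $m_{k}$ for even $k$.

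It remains to pass from the convergence of all moments to the weak convergence $\nu_{\varepsilon}\Rightarrow G$; for this I would invoke \cite[Corollary~2, pag.~296]{S96}, whose hypothesis is that $G$ be determined by its moments. This determinacy is the only real obstacle, and it is exactly where the remark on the moment problem in Section~\ref{sec:wc-gBm} is needed: because $\omega$ has been fixed first, the trajectory is bounded on $[0,1]$, so $m_{2k}\leq\bigl(4Y_{\beta}\|B_{\alpha,\beta}\|_{\infty,[0,1]}^{2}\bigr)^{k}(2k-1)!!$, hence $m_{2k}^{1/(2k)}=O(\sqrt{k})$ and Carleman's condition $\sum_{k}m_{2k}^{-1/(2k)}=\infty$ holds; by contrast, the unconditional law of $B_{\alpha,\beta}^{2}(s)$, with moments of order $\Gamma(k+1)/\Gamma(\beta k+1)$, would not by itself give such a bound. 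Since $4Y_{\beta}B_{\alpha,\beta}^{2}(s)>0$ for Lebesgue-a.e.\ $s$, the mixture $G$ — a superposition of centered Gaussians with a.e.\ positive variances — is absolutely continuous, hence continuous, so $\nu_{\varepsilon}\bigl((-\infty,x]\bigr)\to G(x)$ for every $x\in\mathbb{R}$; discarding the exceptional null set of $\omega$ yields the assertion. Finally, $G$ is a normal variance mixture — obtained by mixing the Gaussians $F_{\sigma}$ over $\sigma=4Y_{\beta}B_{\alpha,\beta}^{2}(s)$ with $s$ uniform on $[0,1]$ and over the independent randomness of $Y_{\beta}$ — which is the last claim.
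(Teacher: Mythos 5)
Your proof is correct, and it is worth recording where it departs from the paper's. For Part~1 you and the paper share the same starting point: factor $B_{\alpha,\beta}^{2}(s+\varepsilon)-B_{\alpha,\beta}^{2}(s)=\Delta_{\varepsilon}(s)\bigl(2B_{\alpha,\beta}(s)+\Delta_{\varepsilon}(s)\bigr)$, use the (H\"older) smallness of $\Delta_{\varepsilon}$ together with \eqref{eq:moduli-cont} to discard every term carrying an extra power of $\Delta_{\varepsilon}$, and reduce everything to $\int_{0}^{t}B_{\alpha,\beta}^{k}(s)Z_{\alpha,\beta,\varepsilon}^{k}(s)\,ds$. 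You finish by approximating $s\mapsto(2B_{\alpha,\beta}(s))^{k}$ uniformly by step functions and applying the Proposition of Section~\ref{sec:wc-gBm} on each subinterval; the paper (Appendix~\ref{sec:appendix}) instead first factors out $\sqrt{Y_{\beta}}$ via \eqref{gbm-rep}, takes a $C^{1}$ regularizing sequence $B_{n}$ of $B_{H}$, integrates by parts, and invokes the uniform-in-$t$ almost sure convergence $\int_{0}^{t}(\Delta_{\varepsilon}/\varepsilon^{H})^{2k+1}\,ds\rightarrow0$ from \cite[Theorem~2.1]{Nourdin-2003}. Both are implementations of the same fact --- that the signed measures $Z_{\alpha,\beta,\varepsilon}^{k}(s)\,ds$ converge weakly against continuous integrands to $Y_{\beta}^{k/2}\mathbb{E}(N^{k})\,ds$ --- and yours is the more elementary, needing only the Proposition and \eqref{eq:moduli-cont} rather than an external regularization-plus-integration-by-parts device (just make sure the almost sure set in the Proposition is chosen once to cover the countably many partition endpoints you use). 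For Part~2 the paper gives no argument beyond the word ``combining''; you supply the actual method of moments, and in particular the Carleman estimate $m_{2k}^{1/(2k)}=O(\sqrt{k})$ --- valid precisely because $\omega$ is frozen first, so the trajectory is bounded --- which is exactly the hypothesis of \cite[Corollary~2, pag.~296]{S96} that the paper only alludes to in the opening of Section~\ref{sec:wc-gBm}. Your further observations, that $\{s:B_{\alpha,\beta}(s)=0\}$ is Lebesgue-null so the limiting mixture is continuous and the distribution functions converge at every $x$, are details the paper omits but that are genuinely needed for the stated pointwise convergence.
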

\begin{proof}
For the proof of (\ref{eq:conv-odd}) see Appendix~\ref{sec:appendix}.
\end{proof}

\subsection{Weak convergence in the $w$-variable}

Using the results on stationary Gaussian process in \cite{Breuer_Major_1983},
\cite{Dobrushin-Major79}, \cite{Giraitis-Surgailis85}, \cite{Taqqu:1979}
and the independence of $Y_{\beta}$ and $B_{H}$ in the representation
(\ref{gbm-rep}) we obtain easily the following convergence results.
Below $\Delta B_{\alpha,\beta}(m/n)$ denotes the increment $B_{\alpha,\beta}((m+1)/n)-B_{\alpha,\beta}(m/n)$
for $m,n$ natural numbers.
\begin{thm}
We have the following limits:
\begin{enumerate}
\item If $k$ is even and $\alpha\in(0,\frac{3}{2})$, as $n\rightarrow\infty$,
\[
\frac{1}{\sqrt{n}}\sum_{m=0}^{n-1}[n^{k\alpha/2}\big(\Delta B_{\alpha,\beta}(m/n))^{k}-m_{k}(N)
Y_{\beta}^{k/2}\big)]\xrightarrow{\mathrm{Law}}Y_{\beta}^{k/2}N_{\sigma_{\alpha,k}^{2}}.
\]

\item If $k$ is even and $\alpha\in(\frac{3}{2},2)$, as $n\rightarrow\infty$,
\[
n^{1-\alpha}\sum_{m=0}^{n-1}[n^{k\alpha/2}(\Delta B_{\alpha,\beta}(m/n))^{k}-m_{k}(N)
Y_{\beta}^{k/2}]\xrightarrow{\mathrm{Law}}Y_{\beta}^{k/2}\mathrm{\; Rosenblatt\, r.v.}
\]

\item If $k$ is odd and $\alpha\in(0,1)$ , as $n\rightarrow\infty$,
\[
\frac{1}{\sqrt{n}}\sum_{m=0}^{n-1}[n^{k\alpha/2}(\Delta B_{\alpha,\beta}(m/n))^{k}
\xrightarrow{\mathrm{Law}}Y_{\beta}^{k/2}N_{\sigma_{\alpha,k}^{2}}.
\]

\item If $k$ is even and $\alpha\in(1,2)$, as $n\rightarrow\infty$,
\[
n^{-\alpha/2}\sum_{m=0}^{n-1}[n^{k\alpha/2}(\Delta B_{\alpha,\beta}(m/n))^{k}
\xrightarrow{\mathrm{Law}}Y_{\beta}^{k/2}N_{\sigma_{\alpha,k}^{2}}.
\]

\end{enumerate}
\end{thm}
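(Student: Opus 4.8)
The plan is to transfer each of the four statements, via the representation (\ref{gbm-rep}), to a known limit theorem for polynomial functionals of fractional Gaussian noise. First I would write, using $B_{\alpha,\beta}\overset{d}{=}\sqrt{Y_{\beta}}\,B_{H}$ with $Y_{\beta}$ independent of $B_{H}$ and $H=\alpha/2$,
\[
n^{k\alpha/2}\big(\Delta B_{\alpha,\beta}(m/n)\big)^{k}=Y_{\beta}^{k/2}\,\big(n^{H}\Delta B_{H}(m/n)\big)^{k},
\]
and then exploit the $H$-self-similarity together with the stationarity of the increments of $B_{H}$: for each fixed $n$ the array $\big(n^{H}\Delta B_{H}(m/n)\big)_{m=0}^{n-1}$ has the law of the fractional Gaussian noise $(\xi_{m})_{m=0}^{n-1}$, $\xi_{m}:=B_{H}(m+1)-B_{H}(m)$, a centered, unit-variance, stationary Gaussian sequence with covariance $r(m)=\tfrac{1}{2}\big(|m+1|^{2H}+|m-1|^{2H}-2|m|^{2H}\big)$, satisfying $r(m)\sim H(2H-1)|m|^{2H-2}$ as $|m|\to\infty$. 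Since $Y_{\beta}$ is independent of $B_{H}$, the joint law of $\big(Y_{\beta},(n^{H}\Delta B_{H}(m/n))_{m}\big)$ equals that of $\big(Y_{\beta},(\xi_{m})_{m}\big)$, so each of the four normalized sums is, for every $n$, equal in distribution to $Y_{\beta}^{k/2}\,S_{n}$, where
\[
S_{n}:=a_{n}^{-1}\sum_{m=0}^{n-1}\big(g_{k}(\xi_{m})-\mathbb{E}\,g_{k}(N)\big),\qquad g_{k}(x):=x^{k},
\]
with $a_{n}=\sqrt{n}$ in items 1 and 3, $a_{n}=n^{\alpha-1}$ in item 2, $a_{n}=n^{\alpha/2}$ in item 4, and with $\mathbb{E}\,g_{k}(N)=m_{k}(N)=0$ when $k$ is odd.

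Next I would apply the classical asymptotics to $S_{n}$. Expanding $x^{k}=\sum_{j\ge0}c_{k,j}H_{j}(x)$ in Hermite polynomials, the Hermite rank of $g_{k}-\mathbb{E}\,g_{k}(N)$ is $q=2$ for $k$ even (since $c_{k,1}=0$ and $c_{k,2}\neq0$) and $q=1$ for $k$ odd (since $c_{k,1}=\mathbb{E}(N^{k+1})\neq0$). For $k$ even, the Breuer--Major summability condition $\sum_{m}|r(m)|^{q}<\infty$ reads $2(2-\alpha)>1$, i.e., $\alpha<3/2$, so \cite{Breuer_Major_1983, Giraitis-Surgailis85} give the central limit statement of item 1 with
\[
\sigma_{\alpha,k}^{2}=\sum_{j\ge q}j!\,c_{k,j}^{2}\sum_{m\in\mathbb{Z}}r(m)^{j}.
\]
For $k$ even and $\alpha\in(\tfrac{3}{2},2)$ one has $\sum_{m}r(m)^{2}=\infty$ with covariance exponent $2-\alpha\in(0,\tfrac{1}{2})$, and the Dobrushin--Major/Taqqu non-central limit theorem \cite{Dobrushin-Major79, Taqqu:1979} produces the Rosenblatt (second Hermite process) limit, the correct normalization being $a_{n}=n^{1-(2-\alpha)}=n^{\alpha-1}$, which matches the $n^{1-\alpha}$ prefactor of item 2. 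For $k$ odd and $\alpha\in(0,1)$ the Hermite rank is $1$ and $\sum_{m}|r(m)|<\infty$, so \cite{Breuer_Major_1983} again yields the Gaussian limit of item 3, the variance being given by the same formula now with $q=1$. Finally, for $k$ odd and $\alpha\in(1,2)$ (item 4) the first-chaos term $c_{k,1}\sum_{m=0}^{n-1}\xi_{m}=c_{k,1}B_{H}(n)$ dominates: divided by $a_{n}=n^{\alpha/2}=n^{H}$ it converges to a centered Gaussian, whereas each higher ($j\ge2$) chaos component grows with a strictly smaller power of $n$ than $n^{H}$ (using $H>\tfrac{1}{2}$ and $\alpha<2$), so the Gaussian limit persists.

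The last step is to restore the independent factor $Y_{\beta}^{k/2}$. Since $Y_{\beta}$ is independent of $B_{H}$, and $\mathbb{E}(Y_{\beta}^{k})=\Gamma(k+1)/\Gamma(\beta k+1)<\infty$ by (\ref{moments}), the marginal convergence of $S_{n}$ to the relevant limit $L$ (Gaussian or Rosenblatt, realized independently of $Y_{\beta}$) upgrades to joint convergence of $(Y_{\beta},S_{n})$ to $(Y_{\beta},L)$ (factor the characteristic function and use independence), and the continuous mapping theorem applied to the product gives convergence in law of $Y_{\beta}^{k/2}S_{n}$ to $Y_{\beta}^{k/2}L$, which is exactly the right-hand side in items 1--4. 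I expect the only genuinely nonroutine point to be the bookkeeping around the Hermite rank in each regime: checking that $q=2$ for $k$ even and $q=1$ for $k$ odd, matching the normalizations $a_{n}$ with the displayed prefactors, and reading the four stated ranges so that they exhaust the non-critical parameters $\alpha\neq1,\tfrac{3}{2}$ (item 2 being the $k$-even, $\alpha\in(\tfrac{3}{2},2)$ case and item 4 the $k$-odd, $\alpha\in(1,2)$ case). Once this is settled, the conclusions follow by direct transcription from \cite{Breuer_Major_1983, Dobrushin-Major79, Giraitis-Surgailis85, Taqqu:1979}.
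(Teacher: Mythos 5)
Your proposal is correct and follows exactly the route the paper itself indicates: the paper gives no detailed proof, merely stating that the results follow from the representation (\ref{gbm-rep}), the independence of $Y_{\beta}$ and $B_{H}$, and the limit theorems of Breuer--Major, Dobrushin--Major, Giraitis--Surgailis and Taqqu applied to fractional Gaussian noise, which is precisely the reduction you carry out (including correctly reading item~4 as the $k$-odd case, evidently a typo in the statement). Your write-up in fact supplies more detail than the paper, notably the Hermite-rank bookkeeping, the verification of the normalizations, and the upgrade to joint convergence with $Y_{\beta}$ needed to multiply by the independent factor $Y_{\beta}^{k/2}$.
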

On the above $m_{k}(N)$ denotes the $k$th moment of $N$ and $\sigma_{\alpha,k}$
is a positive constant depending only on $\alpha$ and $k$ which
may be different from one formula to another. The Rosenblatt random
variable is defined in \cite{Taqqu:1979}. 

In the same spirit of the above theorem, using Theorem~2.4 from \cite{Nourdin-2003}
we get the following weak convergence result. 
\begin{thm}
Let $k\geq3$ be an odd integer, then for $\alpha\in(0,1)$, we have,
as $\varepsilon\rightarrow0$ 
\[
\left\{ \frac{1}{\sqrt{\varepsilon}}\int_{0}^{t}Z_{\alpha,\beta,\varepsilon}^{k}(s)\, ds,\;0
\leq t\leq1\right\} \xrightarrow{\mathrm{Law}}\left\{ \sqrt{c_{k,\alpha}}Y_{\beta}^{k/2}B_{t},
\;0\leq t\leq1\right\} ,
\]
where $B_{t}$, $0\leq t\leq1$ is a standard Brownian motion and
$c_{k,\alpha}$ is given by 
\[
2\sum_{m=1}^{k}\frac{a_{k,m}^{2}}{k}\int_{0}^{\infty}\big[(x+1)^{\alpha}+(x-1)^{\alpha}-
2x^{\alpha}\big]^{m}dx
\]
and $a_{k,m}$ are the coefficient of $x^{k}$ in terms of the Hermite
polynomials, see \cite[pag.~6]{Nourdin-2003}.
\end{thm}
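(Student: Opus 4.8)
The plan is to reduce the statement to Nourdin's functional limit theorem for fractional Brownian motion through the representation (\ref{gbm-rep}), and then to carry that limit over to gBm through the independent multiplicative factor $\sqrt{Y_{\beta}}$. Write $H=\alpha/2$, so that $Z_{\alpha,\beta,\varepsilon}^{k}(s)=\varepsilon^{-kH}\big(B_{\alpha,\beta}(s+\varepsilon)-B_{\alpha,\beta}(s)\big)^{k}$, and introduce the fBm functional
\[
V_{\varepsilon}(t):=\frac{1}{\sqrt{\varepsilon}}\int_{0}^{t}\varepsilon^{-kH}\big(B_{H}(s+\varepsilon)-B_{H}(s)\big)^{k}\,ds,\qquad 0\le t\le 1.
\]
By (\ref{eq:cont_gBm}) the paths of $B_{\alpha,\beta}$ (and of $B_{H}$) are continuous, so for each fixed $\varepsilon>0$ the maps $t\mapsto\frac{1}{\sqrt{\varepsilon}}\int_{0}^{t}Z_{\alpha,\beta,\varepsilon}^{k}(s)\,ds$ and $t\mapsto Y_{\beta}^{k/2}V_{\varepsilon}(t)$ take values in $C([0,1])$. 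Moreover, for fixed $\varepsilon$ the first functional, evaluated at any finite set of times, is the almost sure limit of Riemann sums which are continuous functions of finitely many increments of $B_{\alpha,\beta}$; since almost sure limits of sequences of equal law are again of equal law, the equality of finite-dimensional distributions in (\ref{gbm-rep}) gives
\[
\Big\{\frac{1}{\sqrt{\varepsilon}}\int_{0}^{t}Z_{\alpha,\beta,\varepsilon}^{k}(s)\,ds,\ 0\le t\le 1\Big\}\ \overset{d}{=}\ \big\{Y_{\beta}^{k/2}\,V_{\varepsilon}(t),\ 0\le t\le 1\big\}
\]
as $C([0,1])$-valued random elements, with $Y_{\beta}$ independent of $B_{H}$.

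By Theorem~2.4 of \cite{Nourdin-2003}, for an odd integer $k\ge 3$ and $H=\alpha/2\in(0,1/2)$ (that is, $\alpha\in(0,1)$) one has the functional convergence in law
\[
\{V_{\varepsilon}(t),\ 0\le t\le 1\}\ \xrightarrow{\mathrm{Law}}\ \{\sqrt{c_{k,\alpha}}\,B_{t},\ 0\le t\le 1\},\qquad\varepsilon\to 0,
\]
where $B$ is a standard Brownian motion and $c_{k,\alpha}$ is the constant in the statement, coming from the expansion of $x^{k}$ in Hermite polynomials (the $a_{k,m}$ being the corresponding coefficients). It remains to transfer this through $Y_{\beta}$. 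Since $Y_{\beta}\ge 0$ does not depend on $\varepsilon$ and is independent of $B_{H}$, hence of the whole family $(V_{\varepsilon})_{\varepsilon>0}$ and of the limit $B$, the law of the pair $(Y_{\beta},V_{\varepsilon})$ is the product of the fixed law of $Y_{\beta}$ and the law of $V_{\varepsilon}$; letting $\varepsilon\to 0$ and using that the product of weakly convergent sequences of probability measures converges to the product of the limits, we obtain
\[
(Y_{\beta},V_{\varepsilon})\ \xrightarrow{\mathrm{Law}}\ (Y_{\beta},\sqrt{c_{k,\alpha}}\,B)\qquad\text{in }\ \mathbb{R}_{+}\times C([0,1]),
\]
where on the right $B$ is independent of $Y_{\beta}$. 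The map $\Phi:\mathbb{R}_{+}\times C([0,1])\to C([0,1])$, $\Phi(y,f)=y^{k/2}f$, is continuous, so by the continuous mapping theorem $Y_{\beta}^{k/2}V_{\varepsilon}\xrightarrow{\mathrm{Law}}\sqrt{c_{k,\alpha}}\,Y_{\beta}^{k/2}B$ in $C([0,1])$; combined with the first step this is exactly the asserted convergence.

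The routine parts are the appeal to \cite{Nourdin-2003} and the elementary independence/continuous-mapping argument. The one point that deserves care, and the main obstacle, is the first step: the representation (\ref{gbm-rep}) is only an identity in the sense of finite-dimensional distributions, whereas the convergence is stated for processes in path space. The passage is legitimate because, for each fixed $\varepsilon$, the process $t\mapsto\frac{1}{\sqrt{\varepsilon}}\int_{0}^{t}Z_{\alpha,\beta,\varepsilon}^{k}(s)\,ds$ has continuous paths and, at any finite family of times, is an almost sure limit of continuous functionals of finitely many increments of $B_{\alpha,\beta}$; equality of finite-dimensional laws for all such families then forces equality of the laws in $C([0,1])$, and the same remark applies on the fBm side. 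Once this identification is in place, tightness and finite-dimensional convergence of the gBm functional are both supplied for free by the continuous mapping theorem applied to the joint convergence above.
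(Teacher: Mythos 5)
Your proposal is correct and follows exactly the route the paper intends: the paper gives no written proof beyond invoking Theorem~2.4 of \cite{Nourdin-2003} together with the representation (\ref{gbm-rep}) and the independence of $Y_{\beta}$ and $B_{H}$, which is precisely your argument. Your additional care in upgrading the finite-dimensional identity (\ref{gbm-rep}) to an identity of laws on $C([0,1])$ for the integrated functionals is a detail the paper silently glosses over, and it is handled correctly.
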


\section{Local times of gBm}

\label{sec:lt-gBm}

In this section we will prove the existence of gBm local times using
the criteria of Berman, \cite{Berman:1969wk}. More precisely, we
show that gBm possesses $\lambda$-square integrable local times,
cf.~Theorem~\ref{thm:gBm_LT} below. Moreover, this gBm local times
is weak-approximated by the number of crossings (also called level
sets) associated to the regularized gBm $B_{\alpha,\beta}^{\varepsilon}$,
this is established  in Theorem~\ref{thm:Level_sets_gBm_LT}.

\subsection{Existence of local times for gBm}

For the readers convenience we recall the notion of occupation measure
and occupation density. Let $f:I\longrightarrow\mathbb{R}$, $I$
a Borel set in $[0,1]$, be a measurable function and define, for
any set $B\in\mathcal{B}(\mathbb{R})$, the occupation measure $\mu_{f}$
on $I$ by 
\[
\mu_{f}(B):=\int_{I}1\!\!1_{B}(f(s))\, ds.
\]
Interpreting $[0,1]$ as a ``time set'' this is the ``amount of
time spent by $f$ in $B$ during the time period $I$''. We say
that $f$ has an occupation density over $I$ if $\mu_{f}$ is absolutely
continuous with respect to the Lebesgue measure $\lambda$ and denote
this density by $L^{f}(\cdot,I)$. In explicit, for any $x\in\mathbb{R}$,
\[
L^{f}(x,I)=\frac{d\mu_{f}}{d\lambda}(x).
\]
Thus, we have
\[
\mu_{f}(B)=\int_{I}1\!\!1_{B}(f(s))\, ds=\int_{B}L^{f}(x,I)\, dx.
\]
A continuous stochastic process $X$ has an occupation density on
$I$ if, for almost all $w\in\Omega$, $X(w)$ has an occupation density
$L^{X}(\cdot,I)$, also called local time of $X$, see Berman \cite{Berman:1969wk}. 

The criteria for the existence of local times for stochastic processes
are due to Berman \cite[Section~3]{Berman:1969wk}. More precisely,
a stochastic process $X$ admits a local times if and only if 
\begin{equation}
\int_{\mathbb{R}}\int_{0}^{1}\int_{0}^{1}\mathbb{E}\big(e^{i\theta(X(t)-X(s))}\big)\, ds\, dt\, 
d\theta<\infty.\label{Existence-LT}
\end{equation}
In the next we show that (\ref{Existence-LT}) is fulfilled if the
stochastic process $X$ is the gBm $B_{\alpha,\beta}$. In fact, from
(\ref{eq:cf_gBm_increments}) the characteristic function of the increments
of gBm $B_{\alpha,\beta}$ is given by

\begin{eqnarray*}
\mathbb{E}\big(e^{i\theta(B_{\alpha,\beta}(t)-B_{\alpha,\beta}(s))}\big) & = & 
E_{\beta}\left(-\frac{\theta^{2}}{2}|t-s|^{\alpha}\right).
\end{eqnarray*}
Using Fubini, and the change of variables $r=(2)^{-1/2}\theta|t-s|^{\alpha/2}$,
we have to compute at first 
\begin{eqnarray*}
\int_{\mathbb{R}}E_{\beta}\left(-\frac{\theta^{2}}{2}|t-s|^{\alpha}\right)\, d\theta & = & 
\frac{\sqrt{2}}{|t-s|^{\alpha/2}}\int_{\mathbb{R}}E_{\beta}(-r^{2})\, dr.
\end{eqnarray*}
The integral on the right and side is convergent, see \cite[eq.~(24)]{Berberan-Santos05}.
The $t,s$-integration is performed as follows.
\begin{eqnarray*}
\int_{0}^{1}\int_{0}^{1}\frac{1}{|t-s|^{\alpha/2}}\, ds\, dt & = & 2\int_{0}^{1}\int_{0}^{t}\frac{1}
{|t-s|^{\alpha/2}}\, ds\, dt\\
 & = & \frac{4}{2-\alpha}\int_{0}^{1}t^{1-\alpha/2}\, dt\\
 & = & \frac{8}{(2-\alpha)(4-\alpha)}.
\end{eqnarray*}

Thus, we have shown the main result of this subsection which we state
in the following theorem.
\begin{thm}
\label{thm:gBm_LT}The gBm process $B_{\alpha,\beta}$ admits a $\lambda$-square
integrable local time $L^{B_{\alpha,\beta}}(\cdot,I)$ almost surely.\end{thm}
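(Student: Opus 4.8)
The plan is to verify Berman's Fourier-analytic criterion (\ref{Existence-LT}) for the process $X=B_{\alpha,\beta}$ on the interval $[0,1]$: once the triple integral in (\ref{Existence-LT}) is shown to be finite, Berman's theorem (\cite[Section~3]{Berman:1969wk}) guarantees that, almost surely, the occupation measure $\mu_{B_{\alpha,\beta}}$ on $[0,1]$ is absolutely continuous with a density belonging to $L^{2}(dx)$, which is exactly the assertion that a $\lambda$-square integrable local time $L^{B_{\alpha,\beta}}(\cdot,[0,1])$ exists.

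For the verification I would proceed as in the computation preceding the statement. Substitute the characteristic function of the increments from (\ref{eq:cf_gBm_increments}), namely $E_{\beta}(-\theta^{2}|t-s|^{\alpha}/2)$. The crucial preliminary observation, and the only place where the restriction $0<\beta\leq1$ is genuinely used, is that this integrand is nonnegative: by the Laplace representation (\ref{eq:M_wright}) one has $E_{\beta}(-\lambda)=\int_{0}^{\infty}e^{-\lambda\tau}M_{\beta}(\tau)\,d\tau\geq0$ for every $\lambda\geq0$, so the integrand of (\ref{Existence-LT}) is a nonnegative measurable function and Tonelli's theorem licenses every interchange of integration below. Performing the $\theta$-integration first via the change of variables $r=2^{-1/2}\theta|t-s|^{\alpha/2}$ factorizes the inner integral into $\sqrt{2}\,|t-s|^{-\alpha/2}\int_{\mathbb{R}}E_{\beta}(-r^{2})\,dr$; the scalar integral $\int_{\mathbb{R}}E_{\beta}(-r^{2})\,dr$ is finite because $E_{\beta}(-r^{2})$ is continuous and bounded near $0$ while $E_{\beta}(-x)=O(1/x)$ as $x\to\infty$ for $0<\beta<1$ (and equals $e^{-r^{2}}$ when $\beta=1$), cf.~\cite[eq.~(24)]{Berberan-Santos05}. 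It then remains only to check $\int_{0}^{1}\int_{0}^{1}|t-s|^{-\alpha/2}\,ds\,dt<\infty$, which holds since $\alpha/2<1$, with the explicit value $8/((2-\alpha)(4-\alpha))$ from the elementary Beta-type computation already displayed. This establishes (\ref{Existence-LT}).

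To pass from $[0,1]$ to a general interval $I\subset\mathbb{R}_{+}$ I would invoke the $\tfrac{\alpha}{2}$-self-similarity and stationarity of the increments of $B_{\alpha,\beta}$: running the same argument over $[0,T]^{2}$ (only the finite constant changes) gives $L^{B_{\alpha,\beta}}(\cdot,[0,T])\in L^{2}(\lambda)$ almost surely, and since for $I\subseteq[0,T]$ the occupation density over $I$ is pointwise dominated by that over $[0,T]$, it too is in $L^{2}(\lambda)$; a countable exhaustion $\mathbb{R}_{+}=\bigcup_{n\geq1}[0,n]$ then produces a single almost-sure event on which the statement holds for every bounded interval, hence for every $I$. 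The only genuinely delicate points are the nonnegativity of the integrand (where $0<\beta\leq1$ is needed) and the finiteness of $\int_{\mathbb{R}}E_{\beta}(-r^{2})\,dr$; an alternative route avoiding the latter would be to use the representation $B_{\alpha,\beta}\overset{d}{=}\sqrt{Y_{\beta}}\,B_{H}$ from (\ref{gbm-rep}) and derive the existence and $L^{2}$-integrability of $L^{B_{\alpha,\beta}}$ by conditioning on the strictly positive, independent variable $Y_{\beta}$ from the corresponding known property of fractional Brownian motion with $H=\alpha/2$.
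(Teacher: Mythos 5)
Your proposal follows essentially the same route as the paper: both verify Berman's criterion (\ref{Existence-LT}) by inserting the characteristic function (\ref{eq:cf_gBm_increments}), performing the change of variables $r=2^{-1/2}\theta|t-s|^{\alpha/2}$ to reduce the $\theta$-integral to $\sqrt{2}\,|t-s|^{-\alpha/2}\int_{\mathbb{R}}E_{\beta}(-r^{2})\,dr$ (finite by \cite[eq.~(24)]{Berberan-Santos05}), and checking that $\int_{0}^{1}\int_{0}^{1}|t-s|^{-\alpha/2}\,ds\,dt=8/((2-\alpha)(4-\alpha))<\infty$. Your added justifications (nonnegativity of the integrand via (\ref{eq:M_wright}) to license Tonelli, and the exhaustion argument for general intervals $I$) are correct refinements of the same argument rather than a different approach.
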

\begin{rem}
As a consequence of the existence of the local time $L^{B_{\alpha,\beta}}(\cdot,I)$,
we obtain the occupation formula
\[
\int_{I}f(B_{\alpha,\beta}(s))\, ds=\int_{\mathbb{R}}f(x)L^{B_{\alpha.\beta}}(x,I)\, dx,\; a.s.
\]

\end{rem}

\subsection{Weak-approximation of gBm local times}

We now show that the gBm local times $L^{B_{\alpha,\beta}}(\cdot,I)$
is the weak-limit, $\varepsilon\rightarrow0$, of the set of number
of crossings corresponding to the regularized gBm $B_{\alpha,\beta}^{\varepsilon}$.
At first we recall the definition of the number of crossings of $B_{\alpha,\beta}^{\varepsilon}$.
The number of crossings of level $x$ of $B_{\alpha,\beta}^{\varepsilon}$
in the interval $I$ is defined by
\[
C^{B_{\alpha,\beta}^{\varepsilon}}(x,I):=\#\big\{ t\in I,\; B_{\alpha,\beta}^{\varepsilon}(t)=x\big\}.
\]

\begin{thm}
\label{thm:Level_sets_gBm_LT}For any continuous bounded real function
$f$ and any bounded interval $I$, almost surely, we have 
\[
\varepsilon^{1-\alpha/2}\sqrt{\frac{\pi}{2}}C_{\psi}^{-1}\int_{\mathbb{R}}f(x)C^{B_{\alpha,\beta}^
{\varepsilon}}(x,I)\, dx\xrightarrow[\varepsilon\rightarrow0]{}\sqrt{Y_{\beta}}\int_{\mathbb{R}}f(x)
L^{B_{\alpha,\beta}}(x,I)\, dx.
\]
\end{thm}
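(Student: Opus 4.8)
The plan is to reduce the statement to the corresponding result for fractional Brownian motion via the representation (\ref{gbm-rep}), exactly as was done for the weak-convergence statements in Section~\ref{sec:wc-gBm}. Conditionally on $Y_\beta$, the process $B_{\alpha,\beta}$ is (in law, at the level of finite-dimensional distributions, which is all the relevant functionals see) a scaled fractional Brownian motion $\sqrt{Y_\beta}\,B_H$ with $H=\alpha/2$. Since convolution with $\psi_\varepsilon$ is a linear operation, $B_{\alpha,\beta}^\varepsilon \overset{d}{=} \sqrt{Y_\beta}\,B_H^\varepsilon$ with $B_H^\varepsilon=\psi_\varepsilon*B_H$. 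The number of crossings is invariant under the scaling only up to the level: $C^{\sqrt{y}\,B_H^\varepsilon}(x,I)=C^{B_H^\varepsilon}(x/\sqrt{y},I)$. Hence, writing $G_\varepsilon(f):=\varepsilon^{1-\alpha/2}\sqrt{\pi/2}\,C_\psi^{-1}\int_{\mathbb R}f(x)C^{B_{\alpha,\beta}^\varepsilon}(x,I)\,dx$, a change of variables $x\mapsto\sqrt{Y_\beta}\,x$ turns the left-hand side into $\sqrt{Y_\beta}$ times the analogous functional built from $B_H^\varepsilon$ applied to the rescaled test function $f(\sqrt{Y_\beta}\,\cdot)$.

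First I would invoke the known result for fBm: for a fixed Hurst parameter $H=\alpha/2\in(0,1)$, the regularized process $B_H^\varepsilon$ satisfies, almost surely, for every continuous bounded $g$ and bounded interval $I$,
\[
\varepsilon^{1-H}\sqrt{\tfrac{\pi}{2}}\,C_\psi^{-1}\int_{\mathbb R}g(x)C^{B_H^\varepsilon}(x,I)\,dx \xrightarrow[\varepsilon\to 0]{}\int_{\mathbb R}g(x)L^{B_H}(x,I)\,dx .
\]
This is the Banach--Kac / co-area identity argument alluded to in the introduction: by the Banach--Kac formula $\int_{\mathbb R}g(x)C^{B_H^\varepsilon}(x,I)\,dx=\int_I g(B_H^\varepsilon(t))|\tfrac{d}{dt}B_H^\varepsilon(t)|\,dt$, and after the normalization the derivative term is exactly $\tilde Z_{H,\varepsilon}$ from the statement preceding (\ref{eq:conv-law-reg}) (with $\beta=1$, so $Y_\beta\equiv 1$); one then combines the occupation-time convergence $\int_I g(B_H^\varepsilon(t))\,(\cdot)\,dt\to \mathbb E(|N_{C_\psi}|)\int_I g(B_H(s))\,ds$ — using the joint weak convergence of $(B_H^\varepsilon(t),\tilde Z_{H,\varepsilon}(t))$ together with the a.s. uniform convergence $B_H^\varepsilon\to B_H$ — with the occupation formula $\int_I g(B_H(s))\,ds=\int_{\mathbb R}g(x)L^{B_H}(x,I)\,dx$ and the value $\mathbb E(|N_{C_\psi}|)=C_\psi\sqrt{2/\pi}$, which cancels the constant in front.

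Next I would transfer this to $B_{\alpha,\beta}$. Apply the fBm result, conditionally on $Y_\beta=y$, with $g=f(\sqrt y\,\cdot)$: the left-hand functional equals $\sqrt y$ times the fBm functional (note $\varepsilon^{1-\alpha/2}=\varepsilon^{1-H}$ and $C_\psi$ is the same constant, since it depends only on $\alpha$ and $\psi$), so it converges a.s.\ to $\sqrt y\int_{\mathbb R}f(\sqrt y\,x)L^{B_H}(x,I)\,dx=\int_{\mathbb R}f(u)L^{\sqrt y\,B_H}(u,I)\,du$, by the scaling property of local times $L^{\sqrt y\,B_H}(u,I)=y^{-1/2}L^{B_H}(u/\sqrt y,I)$. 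Since this holds for $\mu$-almost every value $y$ of $Y_\beta$ and $Y_\beta$ is independent of $B_H$, integrating out (or rather, evaluating at the realized value of $Y_\beta$) gives the claimed a.s.\ convergence with right-hand side $\int_{\mathbb R}f(x)L^{B_{\alpha,\beta}}(x,I)\,dx$; and since $L^{B_{\alpha,\beta}}(x,I)=L^{\sqrt{Y_\beta}B_H}(x,I)=Y_\beta^{-1/2}L^{B_H}(x/\sqrt{Y_\beta},I)$, multiplying through by $\sqrt{Y_\beta}$ reproduces the stated form $\sqrt{Y_\beta}\int_{\mathbb R}f(x)L^{B_{\alpha,\beta}}(x,I)\,dx$ once one keeps careful track of where the $\sqrt{Y_\beta}$ sits.

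The main obstacle is making the conditioning-on-$Y_\beta$ argument rigorous: the representation (\ref{gbm-rep}) is only an equality of finite-dimensional distributions, whereas $C^{B_{\alpha,\beta}^\varepsilon}$ and $L^{B_{\alpha,\beta}}$ are functionals of the whole path. One must either argue that the relevant quantities — being, via Banach--Kac and the occupation formula, integrals of measurable functionals of the path — are determined (in law, jointly with $Y_\beta$) by the finite-dimensional distributions, or work on a common probability space where $B_{\alpha,\beta}=\sqrt{Y_\beta}\,B_H$ holds pathwise with $Y_\beta$ independent of a genuine fBm $B_H$; the latter is legitimate since one is proving an a.s.\ statement about the law of the process. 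The remaining care is bookkeeping of the scaling factors and verifying that the constant $C_\psi$ and the normalization $\varepsilon^{1-\alpha/2}$ are unaffected by the $\sqrt{Y_\beta}$ factor — which they are, since $C_\psi$ depends only on $\alpha$ and $\psi$ and the $\varepsilon$-power is the same for $B_H$ and $B_{\alpha,\beta}$.
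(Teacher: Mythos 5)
Your argument is correct, and its analytic core coincides with the paper's: both ultimately rest on the Banach--Kac formula $\int_{\mathbb{R}}f(x)C^{B^{\varepsilon}}(x,I)\,dx=\int_{I}f(B^{\varepsilon}(t))\left|\frac{d}{dt}B^{\varepsilon}(t)\right|dt$, on the almost sure convergence of $\int_{I}f(B(t))\left|\varepsilon^{1-\alpha/2}\frac{d}{dt}B^{\varepsilon}(t)\right|dt$ to $\sqrt{2/\pi}\,C_{\psi}\sqrt{Y_{\beta}}\int_{I}f(B(t))\,dt$, and on the occupation formula. The difference is organizational: the paper applies this chain \emph{directly} to $B_{\alpha,\beta}$, splitting off the term $\int_{I}\bigl(f(B_{\alpha,\beta}^{\varepsilon}(t))-f(B_{\alpha,\beta}(t))\bigr)\left|\varepsilon^{1-\alpha/2}\frac{d}{dt}B_{\alpha,\beta}^{\varepsilon}(t)\right|dt$ (killed by uniform continuity of $f\circ B_{\alpha,\beta}$ and the uniform $L^{1}$ bound on the normalized derivative) and then quoting its own Section~3 convergence, which already carries the $\sqrt{Y_{\beta}}$ factor. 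You instead first condition on $Y_{\beta}$, reduce to a pure fBm statement via the scaling identities $C^{\sqrt{y}X}(x,I)=C^{X}(x/\sqrt{y},I)$ and $L^{\sqrt{y}X}(x,I)=y^{-1/2}L^{X}(x/\sqrt{y},I)$, and then transfer back. This buys you a clean appeal to the classical fBm crossing result, but at the cost of two extra technicalities you correctly flag: the test function $g=f(\sqrt{Y_{\beta}}\,\cdot)$ is random, so the fBm statement must hold on a single null set uniformly over a class of $g$'s (or be handled by Fubini and independence), and the representation (\ref{gbm-rep}) must be upgraded from equality of finite-dimensional distributions to a pathwise identity on a product space --- the same implicit step the paper takes throughout Section~3. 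With those points made explicit, your route is a valid alternative; the paper's direct route simply avoids having to make them.
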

\begin{proof}
For any continuous bounded function $f$ we have
\begin{eqnarray*}
 &  & \varepsilon^{1-\alpha/2}\int_{\mathbb{R}}f(x)C^{B_{\alpha,\beta}^
 {\varepsilon}}(x,I)\, dx\\
 & = & \varepsilon^{1-\alpha/2}\int_{I}f(B_{\alpha,\beta}^{\varepsilon}(t))
 \left|\frac{d}{dt}B_{\alpha,\beta}^{\varepsilon}(t)\right|\, dt\\
 & = & \int_{I}\big(f(B_{\alpha,\beta}^{\varepsilon}(t))-f(B_{\alpha,\beta}(t))\big)
 \left|\varepsilon^{1-\alpha/2}\frac{d}{dt}B_{\alpha,\beta}^{\varepsilon}(t)\right|\, dt\\
 &  & +\int_{I}f(B_{\alpha,\beta}(t))\left|\varepsilon^{1-\alpha/2}\frac{d}{dt}
 B_{\alpha,\beta}^{\varepsilon}(t)\right|\, dt.
\end{eqnarray*}
Since $B_{\alpha,\beta}$ and $f$ are continuous it follows that,
almost surely 
\[
\lim_{\varepsilon\rightarrow0}\sup_{t\in I}\big|f(B_{\alpha,\beta}^{\varepsilon}(t))-
f(B_{\alpha,\beta}(t))\big|=0
\]
and 
\[
\sup_{\varepsilon>0}\int_{I}\left|\varepsilon^{1-\alpha/2}\frac{d}{dt}B_{\alpha,\beta}^
{\varepsilon}(t)\right|\, dt<\infty.
\]
So we have
\[
\lim_{\varepsilon\rightarrow0}\int_{I}\big[f(B_{\alpha,\beta}^{\varepsilon}(t))-
f(B_{\alpha,\beta}(t))\big]\left|\varepsilon^{1-\alpha/2}\frac{d}{dt}B_{\alpha,\beta}^
{\varepsilon}(t)\right|\, dt=0.
\]
For the other integral, we obtain from (\ref{eq:moduli-cont}), with
$p=1$, that
\[
\lim_{\varepsilon\rightarrow0}\int_{I}f(B_{\alpha,\beta}(t))\left|\varepsilon^{1-
\alpha/2}\frac{d}{dt}B_{\alpha,\beta}^{\varepsilon}(t)\right|\, dt=\sqrt{\frac{2}{\pi}}
C_{\psi}\sqrt{Y_{\beta}}\int_{I}f(B_{\alpha,\beta}(t))\, dt.
\]
The result of the theorem follows from the occupation formula. 
\end{proof}
\appendix

\section{\label{sec:appendix}Proof of (\ref{eq:conv-odd})}

Having in mind the representation (\ref{gbm-rep}) it is enough to
show that almost surely, for any $t\in[0,1]$, we have 
\[
\lim_{\varepsilon\rightarrow0}\int_{0}^{t}\left(\frac{B_{H}^{2}(s+\varepsilon)-
B_{H}^{2}(s)}{\varepsilon^{H}}\right)^{2k+1}ds=0.
\]
It is sufficient to show that 
\begin{equation}
\lim_{\varepsilon\rightarrow0}\int_{0}^{t}\left(\frac{B_{H}(s+\varepsilon)-
B_{H}(s)}{\varepsilon^{H}}\right)^{2k+1}B_{H}^{2k+1}(s)\, ds=0\label{eq:conv-frac}
\end{equation}
since the Hölder continuity of fBm implies 
\[
\lim_{\varepsilon\rightarrow0}\sup_{s\in[0,1]}|B_{H}(s+\varepsilon)-B_{H}(s)|=0.
\]
Let $B_{n}$, $n\in\mathbb{N}$ be a $C^{1}$ regularizing sequence
of $B_{H}$. We have
\begin{equation}
\lim_{\varepsilon\rightarrow0}\int_{0}^{t}\left(\frac{B_{H}(s+\varepsilon)-
B_{H}(s)}{\varepsilon^{H}}\right)^{2k+1}B_{n}^{2k+1}(s)\, ds=0.\label{eq:conv-reg}
\end{equation}
In fact, using the integration by parts we obtain
\begin{eqnarray*}
 &  & \int_{0}^{t}\left(\frac{B_{H}(s+\varepsilon)-B_{H}(s)}{\varepsilon^{H}}
 \right)^{2k+1}B_{n}^{2k+1}(s)\, ds\\
 & = & B_{n}^{2k+1}(t)\int_{0}^{t}\left(\frac{B_{H}(s+\varepsilon)-B_{H}(s)}
 {\varepsilon^{H}}\right)^{2k+1}ds\\
 &  & -\int_{0}^{t}\frac{d}{ds}B_{n}^{2k+1}(s)\int_{0}^{s}\left(\frac{B_{H}
 (u+\varepsilon)-B_{H}(u)}{\varepsilon^{H}}\right)^{2k+1}du\, ds.
\end{eqnarray*}
Now (\ref{eq:conv-reg}) follows from the fact that almost surely
uniformly in $t\in[0,1]$ we have (cf.~Theorem~2.1 \cite{Nourdin-2003})
\[
\lim_{\varepsilon\rightarrow0}\int_{0}^{t}\left(\frac{B_{H}(s+\varepsilon)-
B_{H}(s)}{\varepsilon^{H}}\right)^{2k+1}ds=0.
\]
Finally (\ref{eq:conv-frac}) is a consequence of uniform convergence
on $[0,1]$ of the regularizing sequence $B_{n}$ to $B_{H}$. This
finish the prove.

\subsection*{Acknowledgments}

We would like to thank our colleague and friend Y.~Ouknine for the
hospitality and friendship during various visits to Marrakech where
most of this work was done. Financial support of the project CCM -
PEst-OE/MAT/UI0219/2011 and Marie Curie ITN grant FP7-PEOPLE-2007-1-1-ITN,
no.~213841 are gratefully acknowledged.

\end{document}